\documentclass[10pt]{amsart}
\usepackage{graphicx}
\usepackage[centertags]{amsmath}
\usepackage{amsfonts}
\usepackage{amssymb}
\usepackage{amsthm}

\vfuzz2pt 
\hfuzz2pt 
\newtheorem{thm}{Theorem}
\newtheorem{Cor}{Corollary}

\newtheorem{lem}{Lemma}[section]

\newtheorem{prop}[lem]{Proposition}
\theoremstyle{definition}

\theoremstyle{remark}
\newtheorem{rem}{Remark}[section]
\numberwithin{equation}{section}


\newcommand{\norm}[1]{\left\Vert#1\right\Vert}

\newcommand{\pr}[2]{\langle  #1,#2\rangle}

\newcommand{\calA}{\mathcal{A}}

\newcommand{\calF}{\mathcal{F}}

\newcommand{\calH}{\mathcal{H}}

\newcommand{\calJ}{\mathcal{J}}

\newcommand{\calO}{\mathcal{O}}

\newcommand{\calR}{\mathcal{R}}

\newcommand{\bbZ}{\mathbb{Z}}
\newcommand{\Z}{\mathbb{Z}}

\newcommand{\bbR}{\mathbb R}
\newcommand{\R}{\mathbb R}
\newcommand{\bbC}{\mathbb C}
\newcommand{\bbD}{\mathbb D}

\newcommand{\bbN}{\mathbb N}

\newcommand{\bbH}{\mathbb{H}}

\newcommand{\Tr}{ \mbox{tr}}

\newcommand{\SL}{ \mathrm{SL}}

\newcommand{\PSL}{ \mathrm{PSL}}

\newcommand{\Mat}{\mathrm{Mat}}

\newcommand{\vol}{\mathrm{vol}}

\newcommand{\bs}{\backslash}

\newcommand{\frakD}{\mathfrak{D}}

\newcommand{\fraka}{\mathfrak{a}}

\newcommand{\m}{\mathrm m}


\begin{document}
\title[Density Hypothesis]
{The density Hypothesis for irreducible lattices in $\PSL(2,\bbR)^d$}%
\author{Dubi Kelmer}%
\address{Department of Mathematics, Boston College}
\email{kelmer@bc.edu}
\thanks{This work was partially supported by NSF CAREER grant DMS-1651563.}

\subjclass{}%
\keywords{}%

\date{\today}%
\dedicatory{}%
\commby{}%

\begin{abstract}
We prove the density hypothesis for congruence subgroups of an irreducible uniform lattice  in $\PSL_2(\R)^d$, extending previous results on the spherical density hypothesis to bound multiplicities of non-tempered non-spherical representations. Our bounds are uniform in the level as well as the spectral parameters.  \end{abstract}

\maketitle
\section*{introduction}
Let $G=\PSL_2(\R)^d$ and let $\Gamma\leq G$ denote an  irreducible co-compact lattice. 
By  Margulis's Arithmeticity Theorem \cite{Margulis91} all such lattices are arithmetic and from the classification of arithmetic lattices (see Weil \cite{Weil60}) it follows that any such lattice is commensurable to a lattice, $\Lambda$, derived from a quaternion algebra defined over some number field. The space $\Gamma\bs G$ then has a family of congruence covers  $\Gamma(\fraka)\bs G$ where $\fraka$ denotes an integral ideal in the corresponding number field. Explicitly, if $\Gamma$ is commensurable to $g^{-1}\Lambda g$ for some $g\in G^d$ we define
$$\Gamma(\fraka)=\Gamma\cap g^{-1}\Lambda(\fraka)g,$$
where $\Lambda(\fraka)\leq \Lambda$ denotes the principal congruence subgroup of level $\fraka$. The main result of this paper is a bound for the multiplicities of non-tempered representations appearing in $L^2(\Gamma(\fraka)\bs G)$ extending the results of \cite{FraczykHarcosMagaMilicevic24, Kelmer10gap} to include non-spherical representations. 

When $\Gamma$ itself is a congruence group the Ramanujan-Selberg conjecture implies that  all non trivial irreducible representations appearing in $L^2(\Gamma \bs G)$ are tempered. While the proof of this conjecture is currently out of reach, some progress towards it can be made in terms of effective bounds on the strong spectral gap. Explicitly, for any irreducible unitary representation $\pi$ on some Hilbert space, $\calH$, denote by $p(\pi)$ the infimum over all $p\geq 2$ such that matrix coefficients $\pr{\pi(g)v}{v}$ are in $L^p(G)$ for a dense set of vectors $v\in \calH$.  Then $\pi$ is tempered if and only if $p(\pi)=2$ and the size of $p(\pi)$ gives a way to quantify how close the representation is to being tempered. The best known bounds towards the Ramanujan-Selberg conjecture imply that when $\Gamma$ is a congruence group any non-trivial  representation contained in $L^2(\Gamma \bs G)$ satisfies $p(\pi)\leq 2.56$ \cite{KimSarnak03}. For a general irreducible co-compact lattice $\Gamma$ (that is not known to be a congruence lattice) even this is not known, and the best effective result is proved in \cite{KelmerSarnak09}, showing that for any $\epsilon>0$ all representations appearing in $L^2(\Gamma \bs G)$, except perhaps finitely many exceptions satisfy that $p(\pi)\leq 6+\epsilon$. Finally, for congruence covers $\Gamma(\fraka)$ it was shown in \cite{Kelmer10gap}, that any new representation appearing in $L^2(\Gamma (\fraka)\bs G)$ , not already coming from a smaller cover, satisfies that $p(\pi)\leq 7+\sqrt{17}+\epsilon$ (while for spherical representation the bound can be improved to $6+2\sqrt{2}+\epsilon$).

For many applications, instead of the Ramanujan-Selberg conjecture one can make do with effective bounds on the multiplicities,  $\m(\pi,\Gamma)$, of a non tempered representations occurring in $L^2(\Gamma \bs G)$. More generally, given a bounded family $\frak{B}\subseteq \hat{G}$ of non tempered representations  one is interested in bounding the sum 
\begin{equation}\m(\frak B, \Gamma)=\sum_{\pi\in\frak{B}}\m(\pi,\Gamma).\end{equation}
The {\em{Density Conjecture}} of Sarnak and Xue \cite{SarnakXue91}, which is expected to hold for congruence groups in more general semi-simple groups,  can then be stated as the bound 
$$\m(\frak B,\Gamma)\leq C \vol(\Gamma\bs G)^{\frac{2}{p(\frak B)}+\epsilon},$$
where $p(\frak{B})=\min\{p(\pi):\pi\in \frak{B}\}$ and $C=C(\frak B,\epsilon)$ is some constant that may depend on the set $\frak{B}$ and the parameter $\epsilon$.
In \cite{SarnakXue91} Sarnak and Xue proved this for congruence subgroups of $\PSL_2(\R)$ and $\PSL_2(\bbC)$, and as was pointed out in \cite{Kelmer10gap} their argument extends to congruence subgroups of irreducible lattices in products of these groups.

In order to use such multiplicity bounds in applications when the group $G$ has higher rank, one often needs good control on the dependence of the constant $C(\frak B,\epsilon)$ on the set $\frak B$. In particular,  having an explicit polynomial bound, in terms of the Casimir eigenvalues attached to representations in $\frak B$, is referred to as the {\em Density Hypothesis} (we refer to  \cite{GolubevKamber23,FraczykHarcosMagaMilicevic24,FraczykGorodnikNevo24} for a more detailed discussion of the Density Hypothesis and its applications). 
One such result was already established in \cite{Kelmer10gap} in the case of $G=\PSL_2(\R)^d$ by introducing the spectral parameter $T(\pi)$ defined below in  \eqref{e:Tpi}. Explicitly, for  any irreducible representation $\pi$ of $\PSL_2(\R)^d$, the Casimir operator $\Omega_j$ of $\PSL(2,\R)$ (acting on the $j$th factor) acts by a scalar $\lambda_j(\pi)\in \R$, and the parameter $T(\pi)$ grows like $\prod_{j=1}^d\sqrt{|\lambda_j(\pi)|+1}$.
 For a set $\frak B$ we define
$ T(\frak B)=\max\{T(\pi): \pi \in \frak B\}.$
 With these parameters, it was shown in \cite{Kelmer10gap} that for $\frak B=\{\pi\}$ a single non-tempered representation of $\PSL_2(\R)^d$, and any family of congruence covers $\Gamma(\frak a)$ of some fixed arithmetic irreducible lattice $\Gamma$, one can take 
 $C(\frak B,\epsilon)=C(\Gamma,\epsilon) T(\pi)^{\frac{1}{2}+\frac{1}{p(\pi)}+\epsilon}$,
 and that if $\pi$ is a spherical representation one can replace $\frac{1}{2}+\frac{1}{p(\pi)}$ by $\frac{2}{p(\pi)}$.
 Recently, this result was extended in \cite{FraczykHarcosMagaMilicevic24} to deal with more general families of irreducible co-compact lattices in products of $\PSL_2(\R)$ and $\PSL_2(\bbC)$, as well as more general sets $\frak B$ of {\em{spherical}} representations. 
The goal of this note is to improve the results of \cite{Kelmer10gap}, dealing with congruence covers of a fixed lattice $\Gamma$, 
so that the general bound for non spherical representations is as good as the bound for spherical representations. Moreover, we will consider general families $\frak B$ of non tempered representations. Explicitly we show the following.

\begin{thm}\label{t:main1}
For $G=\PSL_2(\R)^d$, given any uniform irreducible lattice $\Gamma\leq G$  and $\epsilon>0$ there is a constant $C=C(\Gamma,\epsilon)$ depending only on $\Gamma$ and $\epsilon$ such that for any family of congruence subgroups $\Gamma(\fraka)\leq \Gamma$  for any  $\pi\in 
\hat{G}$  
$$ \m(\pi,\Gamma(\fraka))\leq C \cdot  (T(\pi)\vol(\Gamma(\frak a)\bs G))^{\frac{2}{p(\pi)}+\epsilon}$$
\end{thm}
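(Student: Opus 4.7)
I would adapt the pre-trace / amplification method of Sarnak--Xue \cite{SarnakXue91}, refining \cite{Kelmer10gap} by treating non-spherical $K$-types more carefully. Fix $\pi = \pi_1 \otimes \cdots \otimes \pi_d \in \hat G$ with minimal $K$-type $\sigma_\pi = \sigma_1 \otimes \cdots \otimes \sigma_d$; write $p = p(\pi)$, $T = T(\pi)$. Since the only non-tempered unitary irreducibles of $\PSL_2(\R)$ are complementary series (hence spherical), any non-spherical factor $\pi_j$ is automatically tempered. I would construct a test function $h \in C_c(G)$ of product form $h = h_1\otimes\cdots\otimes h_d$, where each $h_j$ is $K$-finite of bi-$K$-type $\sigma_j$ and supported in a ball of radius $R_j\geq 0$ (with $R_j>0$ only for the non-tempered, necessarily spherical, factors). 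Setting $\phi = h * h^*$, which is positive-definite on $G$, the pre-trace formula
$$\sum_{\pi'\in\hat G}\m(\pi',\Gamma(\fraka))\Tr(\pi'(\phi)) = \int_{\Gamma(\fraka)\bs G}\sum_{\gamma\in\Gamma(\fraka)}\phi(x^{-1}\gamma x)\,dx,$$
combined with $\Tr(\pi'(\phi)) = \|\pi'(h)\|_{HS}^2 \geq 0$, yields the key inequality $\m(\pi,\Gamma(\fraka))\Tr(\pi(\phi)) \leq \Tr(R_{\Gamma(\fraka)}(\phi))$.

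\textbf{Spectral and geometric bounds.} The product form gives $\Tr(\pi(\phi)) = \prod_j\|\pi_j(h_j)\|_{HS}^2$. For non-tempered spherical factors, classical spherical-transform computations supply an amplification factor $e^{R_j(2-4/p(\pi_j))}$; for tempered non-spherical factors, I would use explicit formulas for $\PSL_2(\R)$-matrix coefficients in $\sigma_j$-isotypic directions (Jacobi and Legendre-type functions with the same leading asymptotic $\Xi^{2/p(\pi_j)}(a_r)$ as the spherical case) so that a well-chosen $h_j$ gives $\|\pi_j(h_j)\|_{HS}^2/\|h_j\|_2^2 \asymp T_j^{-1}$, matching the Plancherel density with $T_j = \sqrt{|\lambda_j(\pi_j)|+1}$. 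After normalising $\|h\|_2^2 = O(1)$ this gives
$$\Tr(\pi(\phi)) \gtrsim T(\pi)^{-1}\,e^{R(2-4/p)},\qquad R = R_1+\cdots+R_d.$$
On the geometric side, the identity element contributes $\vol(\Gamma(\fraka)\bs G)\phi(e)$, and the non-identity contribution is bounded by $\|\phi\|_\infty$ times the lattice-point count $N(\Gamma(\fraka),2\vec R)$. The arithmetic Sarnak--Xue estimate from \cite{Kelmer10gap} --- exploiting that elements of $\Gamma(\fraka)$ are algebraic integers in a fixed number field congruent to $1$ modulo $\fraka$, so that small displacement forces them close to $1$ in all Galois conjugates --- yields
$$N(\Gamma(\fraka),2\vec R) \leq C_\epsilon\left(\frac{e^{2R}}{[\Gamma:\Gamma(\fraka)]}+e^{R(1+\epsilon)}\right).$$

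\textbf{Optimisation and main obstacle.} Dividing the geometric bound by the spectral lower bound and optimising $R$ to balance the Weyl term $\vol(\Gamma(\fraka)\bs G)$ against the arithmetic count term $e^{R(1+\epsilon)}$ yields the density exponent $2/p(\pi)$ and gives
$$\m(\pi,\Gamma(\fraka)) \leq C_\epsilon\,(T(\pi)\vol(\Gamma(\fraka)\bs G))^{2/p(\pi)+\epsilon}.$$
The principal obstacle is the spectral lower bound in the non-spherical factors: one needs $h_j$ of bi-$K$-type $\sigma_j$ whose trace against $\pi_j$ saturates the Plancherel density $T_j^{-1}$ uniformly in $T_j$, with no polynomial loss. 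The approach in \cite{Kelmer10gap} incurred a loss of order $T_j^{1/2-1/p(\pi_j)}$ per non-spherical factor through a crude handling of the $\sigma_j$-projection, producing the weaker exponent $1/2+1/p(\pi)$; replacing this with the sharp estimate via a careful analysis of $\PSL_2(\R)$-matrix coefficients at non-trivial $K$-type is precisely what upgrades the density exponent to $2/p(\pi)$.
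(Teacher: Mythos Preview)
Your architecture matches the paper's --- factorized test function, positivity via $\phi=h*h^*$, amplification at the worst complementary-series factor, optimization in $R$ --- and your spectral lower bound $\Tr(\pi(\phi))\gg T(\pi)^{-1}e^{cR(1-2/p)}$ is correct and not the difficulty. The gap is on the \emph{geometric} side. Bounding the non-identity contribution by $\|\phi\|_\infty \cdot N(\Gamma(\frak a),2\vec R)$ is too crude: if you carry your own optimization through honestly (balance the identity term $V(\frak a)\phi(e)$ against the dominant count term), you obtain only $\m(\pi,\Gamma(\frak a))\ll_\epsilon T(\pi)\,V(\frak a)^{2/p+\epsilon}$, which in the $T$-aspect is weaker even than the $T(\pi)^{1/2+1/p}V(\frak a)^{2/p+\epsilon}$ already in \cite{Kelmer10gap}, and certainly not $(T(\pi)V(\frak a))^{2/p+\epsilon}$. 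With your normalization $\phi_j(e)=1$, the sup-norm bound costs $O(1)$ per lattice point at each tempered factor $j$, whereas what is actually needed is that $\int_{B_c}|\phi_j(g^{-1}\gamma_j g)|\,dg$ be of size $T_j^{-1+\epsilon}$ on average over the relevant $\gamma$. You have located the obstacle on the wrong side of the trace formula.

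The paper's new input is entirely geometric. For each discrete-series factor with $|m_j|\geq 2$ the test function is (a multiple of) the matrix coefficient $\Phi_{m_j}$ itself, and Proposition~\ref{p:PhiEst} proves the refined estimate $\int_{B_c}|\Phi_m(g^{-1}\gamma g)|\,dg\ll \frac{\log m}{\eta\,m}$ for elliptic $\gamma$ with $|\Tr(\gamma)|\leq 2-\eta$. This is paired with the arithmetic lattice count of Proposition~\ref{p:NGamma}, which \emph{gains} a factor $\eta_j$ under the trace constraint $\bigl|\,|\Tr(\gamma_j)|-2\,\bigr|\leq \eta_j$. A dyadic decomposition in $\eta_j$ cancels the $1/\eta_j$ from the orbital integral against the $\eta_j$ from the count, so the non-identity sum is $\ll T(\pi)^\epsilon\bigl(e^{2R}/V(\frak a)+e^{R}/V(\frak a)^{2/3}\bigr)$ rather than $T(\pi)$ times this; one may then choose $e^R=T(\pi)V(\frak a)$ and conclude. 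The passage from the exponent $\tfrac{1}{2}+\tfrac{1}{p}$ in \cite{Kelmer10gap} to $\tfrac{2}{p}$ is precisely this sharpening of the $\Phi_m$ orbital-integral estimate (from $O(m^{-1/2})$ uniformly to $O(m^{-1+\epsilon})$ after the trace-stratified cancellation), not any refinement on the spectral side.
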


Combining this result with the argument of \cite{Kelmer10gap} we get the following consequence regarding the strong spectral gap for congruence covers.
\begin{Cor}
Given a uniform irreducible lattice $\Gamma\leq \PSL_2(\R)^d$ there is an ideal  $\frak{d}$ in the corresponding number field, such that for any $\epsilon>0$ all ideals $\frak{a}$ prime to $\frak{d}$ with sufficiently large norm, all  new representations in $L^2(\Gamma(\frak a)\bs \PSL_2(\R)^d)$ satisfy that $p(\pi)<6+2\sqrt{2}+\epsilon$.
\end{Cor}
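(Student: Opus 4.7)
The plan is to combine Theorem~\ref{t:main1} with the counting/trace argument from \cite{Kelmer10gap}. Recall that in \cite{Kelmer10gap} the bound $p(\pi)<6+2\sqrt{2}+\eps$ for new \emph{spherical} representations was established by balancing two estimates: (i) a geometric lower bound on a sum $\sum_{\pi'\in \frak B_\pi}\m(\pi',\Gamma(\fraka))$ over a small spectral window around $\pi$, obtained from the pre-trace formula applied to a spherical test function and dominated by the identity contribution of size $\asymp \vol(\Gamma(\fraka)\bs G)$; and (ii) the spherical density hypothesis with optimal exponent $\frac{2}{p(\pi)}+\eps$. For non-spherical $\pi$, \cite{Kelmer10gap} was forced to use the weaker density exponent $\frac{1}{2}+\frac{1}{p(\pi)}+\eps$, producing the inferior bound $7+\sqrt{17}+\eps$. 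Since Theorem~\ref{t:main1} supplies the optimal exponent $\frac{2}{p(\pi)}+\eps$ \emph{uniformly} over all $\pi$, the same balance should in principle yield $6+2\sqrt{2}+\eps$ in all cases.

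I would first generalize the lower bound (i) from spherical to arbitrary $K$-type. The idea is to choose a bi-$K$-finite bump $f$ on $G$ whose operator-valued Fourier transform concentrates on the $K$-type and spectral parameter of $\pi$; the pre-trace formula for $f\ast f^*$ on $L^2(\Gamma(\fraka)\bs G)$ still has its geometric side dominated by the identity contribution $\vol(\Gamma(\fraka)\bs G)\|f\|_2^2$, provided $N(\fraka)$ is large enough and $\fraka$ is prime to a fixed ideal $\frak d$ accounting for old-spectrum contributions and congruence obstructions. Localizing the spectral side to a window $\frak B_\pi$ of representations with matching $K$-type and $T(\pi')\asymp T(\pi)$ then produces a lower bound on $\sum_{\pi'\in\frak B_\pi}\m(\pi',\Gamma(\fraka))$ of the same shape as in the spherical case, up to polynomial losses in $T(\pi)$.

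Next, I would apply Theorem~\ref{t:main1} term-by-term on the window to obtain the upper bound
$$\sum_{\pi'\in\frak B_\pi}\m(\pi',\Gamma(\fraka))\leq C\,|\frak B_\pi|\cdot(T(\pi)\vol(\Gamma(\fraka)\bs G))^{\frac{2}{p(\pi)}+\eps}.$$
Comparing this against the lower bound and optimizing the window size exactly as in the spherical argument of \cite{Kelmer10gap} forces $p(\pi)<6+2\sqrt{2}+\eps$ for all new $\pi$, provided $N(\fraka)$ is sufficiently large.

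The main obstacle I anticipate is verifying that the geometric lower-bound step extends uniformly across $K$-types with the same exponent as in the spherical case. This should be a routine modification: the identity contribution and the relevant $L^2$-norm $\|f\|_2^2$ depend only polynomially on the spectral parameter $T(\pi)$, and such polynomial losses are absorbed into the $T(\pi)^\eps$ factor in Theorem~\ref{t:main1} without affecting the final exponent $\frac{2}{p(\pi)}$. Once this is checked, the derivation of the corollary reduces to an algebraic balancing identical to the one carried out in \cite{Kelmer10gap}.
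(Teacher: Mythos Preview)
Your high-level plan---feed Theorem~\ref{t:main1} into the argument of \cite{Kelmer10gap}, so that the non-spherical case now enjoys the same density exponent $\tfrac{2}{p(\pi)}$ that previously drove the spherical bound $6+2\sqrt{2}$---is exactly what the paper does, and the paper gives no further detail than that. So at that level you are fine.

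The gap is in your recollection of what the lower-bound step in \cite{Kelmer10gap} actually is. It is \emph{not} a pre-trace-formula/Weyl-law estimate over a spectral window. Such an estimate bounds the \emph{total} multiplicity in a window from below by the identity contribution; it is blind to whether the representations are new or old, and comparing it against the density upper bound on the same window gives no contradiction (you can always enlarge the window or raise $T$). Your sketch never uses ``new'' in an essential way, and the role you assign to $\frak d$ (``old-spectrum contributions and congruence obstructions'') is not where it enters.

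The mechanism in \cite{Kelmer10gap} is algebraic: if $\pi$ is new at level $\fraka$, the $\pi$-isotypic component of $L^2(\Gamma(\fraka)\bs G^d)$ carries a representation of the finite group $\Gamma/\Gamma(\fraka)$ none of whose irreducible constituents factor through an intermediate quotient. For $\fraka$ prime to a fixed ideal $\frak d$ (chosen so that $\Lambda/\Lambda(\fraka)\cong \PSL_2(\calO_L/\fraka)$ and its minimal ``new'' representation dimension is under control), this forces
\[
\m(\pi,\Gamma(\fraka))\;\gg\; V(\fraka)^{1/3-\eps}.
\]
This individual lower bound---which is uniform in $T(\pi)$ and in the $K$-type of $\pi$, so no extension to non-spherical types is needed here---is then played against the trace-formula inequality underlying Theorem~\ref{t:main1} (equivalently, against the density bound $\m(\pi)\ll (T(\pi)V)^{2/p(\pi)+\eps}$ together with the lattice-point estimates of Proposition~\ref{p:NGamma}). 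Carrying out the same optimization as in \cite{Kelmer10gap}, but now with the exponent $\tfrac{2}{p(\pi)}$ supplied by Theorem~\ref{t:main1} in place of $\tfrac12+\tfrac{1}{p(\pi)}$, yields $p(\pi)<6+2\sqrt{2}+\eps$ for every new $\pi$ once $N(\fraka)$ is large. Replace your step (i) by this finite-group input and the rest of your outline goes through.
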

For some application one needs to control $\m(\frak B,\Gamma)$ for general bounded sets of representations. In this case we have the following. 
\begin{thm}\label{t:main2}
For $G=\PSL_2(\R)^d$, given any uniform irreducible lattice $\Gamma\leq G$  and $\epsilon>0$ there is a constant $C=C(\Gamma,\epsilon)$ depending only on $\Gamma$ and $\epsilon$ such that for any family of congruence subgroups $\Gamma(\fraka)\leq \Gamma$,
for any bounded set $\frak{B}\subseteq \hat{G}$ of non tempered representations 
$$
\m(\frak B,\Gamma) \leq C\cdot  (T(\frak{B})^2\vol(\Gamma(\frak a)\bs G))^{\frac{2}{p(\frak{B})}+\epsilon}.
$$
\end{thm}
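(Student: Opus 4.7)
My plan is to adapt the test-function / pre-trace approach that proves Theorem~\ref{t:main1} so that a single test function $f$ simultaneously resolves all representations in the bounded family $\frak B$. Write $T=T(\frak B)$, $p=p(\frak B)$, and $V=\vol(\Gamma(\fraka)\bs G)$. Theorem~\ref{t:main1} is obtained by choosing for each individual $\pi$ a test function $f_{\pi}\geq 0$ of the form $h_\pi*h_\pi^{*}$ that satisfies $\Tr\pi(f_\pi)\geq 1$ and is supported in a bi-invariant neighborhood of the identity whose scale in the $j$th factor is $\asymp(1+|\lambda_j(\pi)|)^{-1/2}$, so that the overall support has volume $\asymp T(\pi)^{-2}$. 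The spectral positivity yields $\m(\pi,\Gamma(\fraka))\leq\sum_{\gamma}f_\pi(\gamma)$, and the geometric side is controlled by combining the matrix-coefficient decay bound depending on $p(\pi)$ with the arithmetic lattice-point counting lemma of \cite{Kelmer10gap}. For Theorem~\ref{t:main2} I would use the same argument, but with a single $f_{\frak B}=h_{\frak B}*h_{\frak B}^{*}$ designed to have $\Tr\pi(f_{\frak B})\geq c>0$ uniformly for every $\pi\in\frak B$.

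Concretely, since $\frak B$ is bounded in $\hat G$, each coordinate $|\lambda_j(\pi)|$ is bounded by some $\Lambda_j$ with $\prod_j(1+\Lambda_j)\asymp T^{2}$, and I would build $h_{\frak B}=\bigotimes_{j=1}^{d}h_j$ as a tensor product where $h_j$ is a Paley--Wiener type function on $\PSL_2(\R)$ localized at scale $(1+\Lambda_j)^{-1/2}$ and normalized so that $\pr{\pi_j(h_j*h_j^{*})v_j}{v_j}\geq c_j$ for every $\pi_j$ of spectral parameter $|\lambda_j(\pi_j)|\leq\Lambda_j$ and every $K$-type appearing as a tensor factor of some $\pi\in\frak B$. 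Boundedness of $\frak B$ keeps the relevant set of $K$-types finite and independent of $T$. The pre-trace inequality then yields
$$c\cdot\m(\frak B,\Gamma(\fraka))\ \leq\ \sum_{\pi\in\hat G}\m(\pi,\Gamma(\fraka))\,\Tr\pi(f_{\frak B})\ =\ \sum_{\gamma\in\Gamma(\fraka)}f_{\frak B}(\gamma),$$
and the right-hand side is bounded exactly as in Theorem~\ref{t:main1} but with $T(\pi)$ replaced by $T$: the identity contribution is $f_{\frak B}(e)\asymp T^{2}V$, while the contribution of $\gamma\neq e$ is handled by passing to a high convolution power $f_{\frak B}^{*2n}$ and using the uniform $p$-integrability bound combined with the lattice-point count of \cite{Kelmer10gap}. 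This interpolation produces the extra factor $T^{2\cdot(2/p+\epsilon)}$ compared to the single-representation bound, since the relevant support volume shrinks from $T(\pi)^{-2}$ to $T^{-2}$ and enters the geometric estimate raised to the exponent $2/p+\epsilon$.

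The main obstacle will be ensuring that the lower bound $\Tr\pi(f_{\frak B})\geq c$ holds uniformly over all $\pi\in\frak B$, and in particular uniformly across all $K$-types appearing in factors of such $\pi$, since $f_{\frak B}$ is a single fixed function rather than one tailored to each individual $\pi$. This construction must exploit the fact that only finitely many $K$-types are involved (by boundedness of $\frak B$) and that an appropriately normalized approximate identity on a ball of radius $(1+\Lambda_j)^{-1/2}$ makes every principal- and complementary-series matrix coefficient uniformly close to $1$; tensoring over the $d$ factors then yields the required simultaneous positivity. Once this step is executed, the remainder of the argument is a direct application of the geometric estimates already developed for Theorem~\ref{t:main1}.
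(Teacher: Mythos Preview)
Your proposal has a genuine gap in the treatment of discrete-series factors. You write that ``boundedness of $\frak B$ keeps the relevant set of $K$-types finite and independent of $T$,'' but this is false: if $\pi\in\frak B$ has a factor $\pi_j\cong\frak D_{m}$, then the lowest $K$-type in that factor is $m$, and $|m|=T(\pi_j)$ can be as large as $T(\frak B)=T$. So the set of $K$-types you must control has size growing like $T$, not bounded independently of $T$. Your approximate-identity construction $h_j$ at scale $(1+\Lambda_j)^{-1/2}$ lives in a fixed $K$-type (or a bounded set of them), and there is no obvious way to make a single such function satisfy $\Tr\pi_j(h_j*h_j^{*})\geq c$ uniformly over all $\frak D_m$ with $|m|$ ranging over an interval of length $\asymp T$. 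This is precisely the main technical obstacle in passing from Theorem~\ref{t:main1} (a single $\pi$) to Theorem~\ref{t:main2} (a family).

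The paper resolves this by first dyadically decomposing $\frak B$ into pieces $\frak B_T$ with $T(\pi_j)\in[T_j,2T_j)$, then further partitioning according to the \emph{type} of each factor (complementary series, principal series, discrete series with positive/negative weight), and using a different test function on each factor: $F^{ex}_R$ from Proposition~\ref{p:FR} on the one non-tempered factor $j_0$ (this is the amplification step, with $R$ chosen at the end); $F^{sp}_{T_j}$ from Proposition~\ref{p:FT2} on principal-series factors; and, crucially, the smoothed sum $F^{dis}_{T_j}=\frac{T_j}{4\pi}\sum_m\rho(m/T_j)\Phi_m$ from Proposition~\ref{p:FM} on discrete-series factors. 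The spherical transform of $F^{dis}_{T_j}$ picks up all $\frak D_m$ with $m\in[T_j,2T_j]$ at once, and the heart of the argument is the orbital-integral bound in Proposition~\ref{p:FM}(3), established via the Poisson-summation estimate of Lemma~\ref{l:FM} and the $KA^+K$ angular analysis of Lemma~\ref{l:kak2}. Your sketch contains no analogue of this construction, and the vague reference to ``passing to a high convolution power $f_{\frak B}^{*2n}$'' does not substitute for it: convolution powers of an approximate identity do not produce the selective amplification $e^{2R(1-1/p_0)}$ on the complementary-series factor that drives the exponent $2/p$, nor do they address the $K$-type issue above.
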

\begin{rem}
Note that taking $\frak B=\{\pi\}$ in Theorem \ref{t:main2} the bound we get is not as good as the one in Theorem \ref{t:main1}.
On the other hand if $\frak{B}$ is a large set, say $\frak{B}=\{\pi: T(\pi)\leq T_0\}$ then the bound in Theorem \ref{t:main2} is much better than the bound we would get from summing $\sum_{\pi\in \frak{B}} \m(\pi,\Gamma(\fraka))$ and using Theorem \ref{t:main1} for each term separately. 
\end{rem}

\section{Preliminaries and Notation}\label{s:BG}
\subsection{Notations}
We write $X\ll Y$ or $X=O(Y)$ to indicate that $X\leq CY$ for some constant $C$.
If we wish to emphasize that this constant depends on some parameters, we indicate it by subscripts, for example $X\ll_\epsilon Y$. We will write $X\asymp Y$ to indicate that $X\ll Y\ll X$.

\subsection{Coordinates}
Throughout this note we will denote by $G=\PSL_2(\bbR)$ and let $P,A,K\subset G$ denote the subgroups of upper triangular matrices, diagonal matrices, and orthogonal matrices respectively. For $t\in \bbR$ and $\theta\in[-\pi,\pi]$ let $a_t=\left(\begin{smallmatrix}e^{t/2} & 0\\0& e^{-t/2}\end{smallmatrix}\right)\in A$ and $k_\theta=\left(\begin{smallmatrix}\cos(\frac{\theta}{2}) & \sin(\frac{\theta}{2}) \\-\sin(\frac{\theta}{2})& \cos(\frac{\theta}{2})\end{smallmatrix}\right)\in K$. For $z=x+iy\in \bbH$ let $p_z=\left(\begin{smallmatrix}y^{1/2} & xt^{-1/2} \\ 0 & y^{-1/2}\end{smallmatrix}\right)\in P$.
The decomposition $G=PK$ gives us the coordinate system $g(z,\theta)=p_zk_\theta$. The Haar measure in these coordinates is given by $dg(z,k)=d\mu(z)dk$ where $d\mu(z)=\tfrac{dxdy}{y^2}$ and $dk=\frac{d\theta}{2\pi}$. We also have the decomposition $G=KA^+K$, where $A^+=\{a_t: t\geq 0\}$, and in the corresponding coordinates the Haar measure is given by $dg(k,t,k')=2\pi \sinh(t)dtdkdk'$.

For $g=\left(\begin{smallmatrix} a& b\\ c&d\end{smallmatrix}\right)\in G$ let $\norm{g}^2=\Tr(g^tg)=|a|^2+|b|^2+|c|^2+|d|^2$ and $H(g)=d(gi,i)$ where $d(\cdot,\cdot)$ denotes the hyperbolic distance on $\bbH$. We thus have that $H(ka_tk')=t$ and a simple calculation shows that $\norm{g}^2=2\cosh(H(g))$.
The triangle inequality for the hyperbolic distance implies that for any $g,\gamma\in G$
\begin{equation}\label{e:triangle}
|H(g^{-1}\gamma g)-H(\gamma)|\leq 2H(g),
\end{equation}
For any $c>0$ we denote by $B_c\subset G$ the norm ball 
\begin{equation}\label{e:B_c}
B_c=\{g\in G|H(g)\leq c\}=\{g\in G\;|\;\|g\|^2\leq 2\cosh(c)\}
\end{equation}

We record here a simple identity relating the  $KA^+K$ coordinates of conjugates of $k_\theta\in K$.
\begin{lem}\label{l:kak}
For any $t>0$ and $\theta\in [-\pi ,\pi]$ decompose 
$$a_{-t}k_\theta a_t=k_\alpha a_H k_\beta$$
Then
\begin{equation}\label{e:aka1}
\cosh(H)=1+\sin^2(\tfrac{\theta}{2})(\cosh(t)-1),
\end{equation}
and
\begin{equation}\label{e:aka2}
\alpha+\beta=\theta+2\mathrm{Arg}(1+\sinh^2(\frac{t}{2})(1-e^{-i\theta}))
\end{equation}
\end{lem}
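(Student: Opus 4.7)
The plan is a direct matrix computation. First I would evaluate the product explicitly:
$$a_{-t}k_\theta a_t = \begin{pmatrix} \cos(\theta/2) & e^{-t}\sin(\theta/2) \\ -e^{t}\sin(\theta/2) & \cos(\theta/2)\end{pmatrix},$$
which is just a two-step matrix multiplication. Once this is written down, equation (\ref{e:aka1}) should fall out of the identity $\|g\|^2 = 2\cosh(H(g))$ recorded in the preliminaries: computing the Frobenius norm of the displayed matrix gives $2\cos^2(\theta/2) + (e^{2t}+e^{-2t})\sin^2(\theta/2)$, and then $\cos^2(\theta/2) = 1-\sin^2(\theta/2)$ rearranges this into the claimed form of $\cosh(H)$.

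For (\ref{e:aka2}) the key algebraic observation is that, by expanding $k_\alpha a_H k_\beta$ as a product and reading off the trace and the antidiagonal difference, one finds that for any matrix $g=\begin{pmatrix}a & b\\ c & d\end{pmatrix}$ admitting a $KA^+K$ decomposition $g = k_\alpha a_H k_\beta$,
$$(a+d) + i(b-c) = 2\cosh(H/2)\, e^{i(\alpha+\beta)/2}.$$
Substituting the entries of $g$ computed in the first step, the left-hand side becomes $2\cos(\theta/2) + 2i\cosh(t)\sin(\theta/2)$. I would then factor out $e^{i\theta/2}$ and use the identity $2i\sin(\theta/2)e^{-i\theta/2} = 1-e^{-i\theta}$ together with $\cosh(t)-1 = 2\sinh^2(t/2)$ to rewrite the bracketed factor as $1+\sinh^2(t/2)(1-e^{-i\theta})$. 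Since $2\cosh(H/2)$ is a positive real number, taking the argument of both sides and doubling produces (\ref{e:aka2}).

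Both parts reduce to elementary trigonometry once $g$ is written out, so I do not anticipate any substantive obstacle. The one step that requires a small amount of care is the derivation of the identity $(a+d)+i(b-c) = 2\cosh(H/2)e^{i(\alpha+\beta)/2}$, which I would verify once and for all by direct expansion of $k_\alpha a_H k_\beta$ in coordinates, using the sum-of-angles formulas for sine and cosine of $(\alpha+\beta)/2$.
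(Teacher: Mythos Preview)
Your argument for \eqref{e:aka1} is the paper's argument: both compute the Frobenius norm of $a_{-t}k_\theta a_t$ and invoke $\|g\|^2=2\cosh(H(g))$. One point you should not gloss over: your (correct) matrix has off-diagonal entries $e^{\pm t}\sin(\tfrac{\theta}{2})$, so the norm is $2\cos^2(\tfrac{\theta}{2})+2\cosh(2t)\sin^2(\tfrac{\theta}{2})$ and the rearrangement actually yields $\cosh(H)=1+\sin^2(\tfrac{\theta}{2})(\cosh(2t)-1)$, not the printed formula with $\cosh(t)$. The paper's own proof contains the same slip; carrying out either computation honestly gives $\cosh(2t)$ (equivalently $2\sinh^2(t)$ in place of $2\sinh^2(\tfrac{t}{2})$), as one can also confirm by the test case $\theta=\pi$, where $a_{-t}k_\pi a_t\cdot i=e^{-2t}i$ so $H=2t$.

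For \eqref{e:aka2} your route is correct and genuinely different from the paper's. The paper passes to the disk model via a Cayley transform, where $\tilde k_\theta$ becomes diagonal; it then lets $\tilde k_\alpha\tilde a_H\tilde k_\beta$ act on $0\in\bbD$ to read off $e^{i\alpha}\tanh(\tfrac{H}{2})$, does the same for the inverse to obtain $-e^{-i\beta}\tanh(\tfrac{H}{2})$, and divides the two expressions to isolate $e^{i(\alpha+\beta)}$. You instead stay in $\SL_2(\bbR)$ and exploit the single identity $(a+d)+i(b-c)=2\cosh(\tfrac{H}{2})\,e^{i(\alpha+\beta)/2}$, which indeed follows from expanding $k_\alpha a_H k_\beta$ and the angle-sum formulas for $\cos(\tfrac{\alpha+\beta}{2})$ and $\sin(\tfrac{\alpha+\beta}{2})$; the factoring $2\cos(\tfrac{\theta}{2})+2i\cosh(t)\sin(\tfrac{\theta}{2})=2e^{i\theta/2}\bigl(1+\sinh^2(\tfrac{t}{2})(1-e^{-i\theta})\bigr)$ then goes through exactly as you describe. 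Your method is shorter and more elementary, avoiding both the change of model and the inverse trick; the paper's approach is slightly more geometric and incidentally produces $e^{i\alpha}$ and $e^{-i\beta}$ separately, though only their product is needed here.
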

\begin{proof}
For the first identity we have that 
$$2\cosh(H)=\|a_{-t}k_\theta a_t\|^2=2\cos^2(\frac{\theta}{2})+2\cosh(t)\sin^2(\frac{\theta}{2}),$$
hence
$$\cosh(H)=1+\sin^2(\tfrac{\theta}{2})(\cosh(t)-1).$$
For the second identity let $w=\frac{1}{\sqrt{2}}\begin{pmatrix}1& i\\i &-1\end{pmatrix}$ denote the transformation sending upper half space, $\bbH$, to the unit disk, $\bbD=\{z\in \bbC: |z|<1\}$, with $w(i)=0$ and let $\tilde{a}_t=wa_t w^{-1}=\begin{pmatrix} \cosh(\frac{t}{2}) & \sinh(\frac{t}{2})\\ \sinh(\frac{t}{2})& \cosh(\frac{t}{2})\end{pmatrix}$ and $\tilde k_\theta=w k_\theta w^{-1}=\begin{pmatrix} e^{i\theta/2} & 0\\ 0& e^{-i\theta/2}\end{pmatrix}$. Writing $\tilde{a}_{-t}\tilde{k}_\theta\tilde{a}_t=\tilde{k}_\alpha \tilde{a}_H\tilde{k}_\beta$ and acting on $0\in \bbD$ we see that 
$$e^{i\alpha}\tanh(\frac{H}{2})=\frac{\cosh(\frac{t}{2})\sinh(\frac{t}{2})(e^{i\theta}-1)}{1+\sinh^2(\frac{t}{2})(1-e^{i\theta})}.$$
Taking inverses we also have that 
$\tilde{a}_{-t}\tilde{k}_{-\theta}\tilde{a}_t=\tilde{k}_{-\beta} \tilde{a}_{-H}\tilde{k}_{-\alpha}$ hence
$$-e^{-i\beta}\tanh(\frac{H}{2})=\frac{\cosh(\frac{t}{2})\sinh(\frac{t}{2})(e^{-i\theta}-1)}{1+\sinh^2(\frac{t}{2})(1-e^{-i\theta})}.$$
Dividing the two identities we get
$$-e^{i(\alpha+\beta)}=-e^{i\theta}\frac{1+\sinh^2(\frac{t}{2})(1-e^{-i\theta})}{1+\sinh^2(\frac{t}{2})(1-e^{i\theta})}.$$
Hence $\alpha+\beta=\theta+2\mathrm{Arg}(1+\sinh^2(\frac{t}{2})(1-e^{-i\theta}))$ as claimed
\end{proof}

\subsection{Unitary Dual}
Denote by $\hat{G}$ the unitary dual of $G$, that we parameterize by $(0,\frac{1}{2})\cup \{\frac{1}{2}+i \bbR^+\}\cup (\bbZ\setminus\{0\})$.
In particular, the spherical irreducible representations of $G$ are given by the principal
series representations $\pi_{s},\;s\in\frac{1}{2}+i\bbR^+$  and the complementary series representations
$\pi_s,\;s\in (0,\tfrac{1}{2})$. We note that the Casimir eigenvalue for $\pi_s$ is $s(1-s)$. The non-spherical representation are given by the discrete series $\frakD_{m},\;m\in\bbZ\setminus\{0\}$ for which the Casimir eigenvalue is $|m|(1-|m|)$. 
The discrete and principal series are both tempered while the complementary series is non-tempered with $p(\pi_s)=\frac{1}{\Re(s)}$. 

An irreducible unitary representation of $G^d$ is of the form $\pi\cong \pi_1\otimes\cdots\otimes\pi_d$ with each $\pi_j\in \hat{G}$. For such a representation $p(\pi)=\max_{j}p(\pi_j)$ and $T(\pi)=\prod_j T(\pi_j)$ where we define the parameters
\begin{equation}\label{e:Tpi}
T(\pi_j)=\left\lbrace\begin{array}{cc}
1 & \pi_j\cong \pi_s: s\in (0,1/2)\\
1+|r| & \pi_j\cong \pi_{\frac{1}{2}+ir}\\
|m| & \pi_j\cong \frak{D}_m
\end{array}\right.
\end{equation}
Note that if $\lambda(\pi_j)$ is the Casimir eigenvalue for $\pi_j$   then $T(\pi_j)\asymp \sqrt{|\lambda(\pi_j)|+1}$.

\subsection{Spherical transforms}
For $m\in \bbZ$ let $\chi_m$ denote the character of $K$ given by $\chi_m(k_\theta)=e^{im\theta}$.
We denote by $\calF_m\subset C^{\infty}(G)$ the subspace of smooth functions satisfying that 
\[\forall\; k,k'\in K,\quad f(kgk')=\chi_m(kk')f(g).\]
The $m$-spherical transform on $\calF_m$ is defined by the integral
\[S_mf(r)=\int_{G}f(g)\phi_{-m,\frac{1}{2}+ir}(g)dg\]
(when it converges), where $\phi_{m,s}\in \calF_m$ is the unique eigenfunction of the Casimir operator with eigenvalue $s(1-s)$ satisfying $\phi_{m,s}(1)=1$.

For $f_1,f_2\in\calF_m$ their convolution $f_1*f_2\in\calF_m$ is given by
\[f_1*f_2(x)=\int_Gf_1(g)f_2(g^{-1}x)dx.\]
Under convolution the spherical transform satisfies
$$S_m(f_1*f_2)(r)=S_mf_1(r)S_mf_2(r).$$
Denote by $\check{f}(g)=\overline{f(g^{-1})}$ then $S_m(\check{f})(r)=\overline{S_mf(\bar{r})}$, in particular for any $f\in\calF_m$ we have that $S_m(f*\check{f})(r)=|S_mf(r)|^2$ is nonnegative for  $r\in \bbR\cup i\bbR$.

For a function $f\in C^\infty_c(G)$ that is a linear combination of functions with different $K$-types, $f=\sum_m f_m$, with each $f_m\in \calF_m$ we define the spherical transform accordingly as $Sf=\sum_m S_mf_m$. In particular, for $f\in \calF_m$ we have that $Sf=S_mf$ and we may omit the subscript from the notation.

When $f\in\calF_0$ is sepherical and is supported on $B_c$, its spherical transform $h(r)=Sf(r)$ is an even holomorphic function of uniform exponential type $c$, or equivalently, its Fourier transform $\hat{h}$ is even, smooth, and supported on $[-c,c]$ (see  \cite[Ch. 5]{Lang85}). Moreover, the spherical transform gives a bijection between the space of smooth compactly supported functions in $\calF_0$ and the space $\rm{PW}(\bbC)$ of even holomorphic functions of uniform exponential type. The inverse transform is given by the following formula (see \cite[Ch 5]{Lang85}) 
\begin{equation}\label{e:Sinv}
S_0^{-1}h(g)=\int_\R h(r)\phi_{\tfrac12+ir}(g)r\tanh(\pi r)dr.
\end{equation}

For $|m|>1$ let $\Phi_m=\phi_{m,|m|}$ denote the spherical function of weight $m$ and eigenvalue $|m|(1-|m|)$. This function is given by the formula
\begin{equation}\label{e:mSpherical}
\Phi_m(ka_tk')=\frac{\chi_m(kk')}{(\cosh\frac{t}{2})^{2|m|}}.
\end{equation}
By orthogonality relations, its spherical transform vanishes unless  unless $r=\pm i(|m|-\tfrac{1}{2})$ and when $|m|>1$ we have
\begin{equation}\label{e:SPhim}
S_m\Phi_m(r)=\left\lbrace\begin{array}{cc}
\frac{4\pi}{2|m|-1} & r=\pm i(|m|-\tfrac{1}{2})\\
0 & \mbox{ otherwise}.
\end{array}\right.
\end{equation}
Note that when $|m|>1$ this function decays sufficiently fast so that the integral defining the spherical transform absolutely converges.

\subsection{Latticed derived from quaternion algebras}
Let $L$ denote a totally real number field of degree $n\geq d$ and denote by $\iota_j:L\to \R$ the different embeddings of $L$ in $\R$.
Given non zero $a,b\in L$ the quaternion algebra  $\calA=\left(\frac{a,b}{L}\right)$  is defined as
$$\calA=\{x+yI+wJ+zK: x,y,z,w\in L\},$$ where $I^2=a, J^2=b$ and $IJ=-JI=K$. We assume that $\calA$ is a division algebra satisfying that $\calA\otimes_{\iota_j}\bbR\cong \Mat_2(\R)$ for $j\leq d$  and $\calA\otimes_{\iota_j}\bbR\cong \Mat_2(\bbC)$  for $d<j\leq n$.  For any $j\in \{1,\ldots, n\}$ there is a natural homomorphism, we still denote by $\iota_j$,  from $\calA$ to $\Mat_2(\bbR)$ (resp. $\Mat_2(\bbC)$) so that  $\iota_j(I)=\begin{pmatrix} \sqrt{\iota_j(a)} & 0\\0& -\sqrt{\iota_j( a)}\end{pmatrix}$ and  $\iota_j(J)=\begin{pmatrix}0 & \iota_j(b)\\1 & 0 \end{pmatrix}$. Let $n_\calA:\calA\to L$ denote the norm map $n_\calA(x+yI+wJ+zK)=x^2-ay^2-bz^2+abz^2$ and note that $\iota_j(n_\calA(\alpha))=\det(\iota_j(\alpha))$.

Let $\calO_L$ denote the ring of integers of $L$ and consider the order $\calR\subset\calA$ defined by
$$\calR=\{x+yI+wJ+zK:x,y,w,z\in \calO_L\}.$$
Let  $\calR^1=\{\alpha\in \calR: n_\calA(\alpha)=1\}$ denote the subgroup of norm one elements and note that for any $1\leq j\leq d$ we have that $\iota_j(\calR^1)\subseteq \PSL_2(\R)$.  Let $\iota=(\iota_1,\ldots, \iota_d)$ and note that 
$\Lambda=\{\iota(\alpha): \alpha\in \calR^1\}$, 
is an irreducible co-compact lattice in $\PSL_2(\R)^d$.  For any integral ideal $\fraka\subseteq \calR$ the corresponding principal congruence subgroup is given by 
$$\Lambda(\frak a)=\{\iota(\alpha): \alpha\in \calR^1,\; \alpha- 1\in \frak a\}.$$

As noted above, any irreducible co-compact lattice $\Gamma$  in $\PSL_2(\R)^d$ is commensurable to a lattice $\Lambda$ derived from a quaternion algebra in this way. That is, there is some $g\in \PSL_2(\R)^d$ such that $\Gamma\cap g^{-1}\Lambda g$ is of finite index in $\Gamma$ and in $g^{-1}\Lambda g$. In this case we define the congruence subgroups of $\Gamma$ by $\Gamma(\frak a)=\Gamma\cap g^{-1}\Lambda(\frak a)g$.

\subsection{Lattice point counting}
Given an irreducible lattice  $\Gamma\leq G^d$ we consider the following counting functions. 
For a partition $\{1,2,\ldots,d\}=J_0\cup J_1\cup J_2\cup J_3$ with $|J_0|=1$ and parameters  $x\geq 1$ large,  $k_j\in \bbN$ for $j\in J_1$, and   $\eta_j\in (0,1)$ for each $j\in J_2$ as well as some uniform constant $C\geq 1$ we consider
\begin{equation}\label{e:NGamma}
N_{\Gamma}(x,\eta,k)=\#\left\{\begin{array}{c}\gamma\in \Gamma\\ \gamma\neq I \end{array}: \begin{array}{cc} \|\gamma_j\|^2\leq x & j\in J_0\\
 \|\gamma_j\|^2\in [k,k+1),\; |\Tr(\gamma_j)|<2 &  j\in J_1\\
 \|\gamma_j\|\leq C,\;| |\Tr(\gamma_j)|-2|\leq \eta_j & j\in J_2\\
 \|\gamma_j\|\leq C & j\in J_3
 \end{array}\right\}.
 \end{equation}
If $J_1$ (or $J_2$) is empty we ommit the parameters $k$ (or $\eta$) accordingly. 
 
We note that if  $\Gamma\leq G^d$  is of finite index in $\Lambda$ then
  $N_{\Gamma(\frak{a})}(x,\eta,k)\leq N_{\Lambda(\frak{a})}(x,\eta,k)$. Hence, the estimates in \cite[Proposition 2.3]{Kelmer10gap} imply the following.
\begin{prop}\label{p:NGamma}
For any family of congruence subgroups $\Gamma(\frak a)\leq \Gamma$ we have
$$N_{\Gamma(\frak{a})}(x,\eta,k)\ll_{\Gamma,C,\epsilon} |\eta|\left( \frac{x^{1+\epsilon}}{V(\frak{a})}+\frac{|k|^\epsilon x^{1/2+\epsilon}}{V(\frak{a})^{2/3}} \right)$$
 where $\eta=\prod_{j\in J_2} \eta_j,\; |k|=\prod_{j\in J_1}k_j$ and $V(\frak{a})=\vol(\Gamma(\frak{a})\backslash  G^d)\asymp [\Gamma:\Gamma(\frak a)]$.
\end{prop}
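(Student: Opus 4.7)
The plan is to reduce the stated bound for $\Gamma(\fraka)$ to the corresponding bound for the principal congruence subgroups $\Lambda(\fraka)$ of the canonical arithmetic lattice, and then invoke \cite[Proposition 2.3]{Kelmer10gap}. The key observation is that by construction $\Gamma(\fraka) = \Gamma \cap g^{-1}\Lambda(\fraka) g$ for a fixed $g\in G^d$ that depends only on $\Gamma$, so the map $\gamma \mapsto \lambda := g\gamma g^{-1}$ injects $\Gamma(\fraka)\sm\{I\}$ into $\Lambda(\fraka)\sm\{I\}$.

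Next, I would verify that each condition defining $N_{\Gamma(\fraka)}(x,\eta,k)$ translates into an analogous condition on $\lambda$ with only a bounded, $\Gamma$-dependent loss in the parameters. The trace condition for $j\in J_2$ is conjugation invariant, so it survives unchanged. For the norm conditions, applying \eqref{e:triangle} coordinatewise gives $|H(\lambda_j)-H(\gamma_j)|\leq 2H(g_j)$, and since $\|g\|^2 = 2\cosh(H(g))$, this yields $\|\lambda_j\|^2 \asymp_\Gamma \|\gamma_j\|^2$. Hence $\|\gamma_j\|^2\le x$ becomes $\|\lambda_j\|^2\le C_\Gamma x$ for $j\in J_0$, and $\|\gamma_j\|^2\in [k_j,k_j+1)$ forces $\|\lambda_j\|^2$ into an interval of length $O_\Gamma(1)$ at a location comparable to $k_j$, which can be covered by $O_\Gamma(1)$ unit intervals $[k_j',k_j'+1)$ with $k_j'\asymp_\Gamma k_j$.

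Applying \cite[Proposition 2.3]{Kelmer10gap} to each of the resulting counts in $\Lambda(\fraka)$ and summing over the $O_\Gamma(1)$ covering intervals yields the claimed bound: the multiplicative $\Gamma$-dependent constants in $x$ and $|k|$ are absorbed harmlessly into the $x^\epsilon$ and $|k|^\epsilon$ factors and into the implicit $\ll_{\Gamma,C,\epsilon}$ constant, and the preserved trace condition continues to supply the $|\eta|$ gain. There is no serious obstacle here: the only substantive point requiring care is the replacement of the unit-width norm condition on $\gamma_j$ by a bounded-width interval on $\lambda_j$, but the tolerance of the cited estimate for a $|k|^\epsilon$ loss in this very parameter makes the reduction essentially automatic.
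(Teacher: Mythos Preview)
Your overall strategy matches the paper's: reduce to the principal congruence group derived from the quaternion algebra and invoke \cite[Proposition~2.3]{Kelmer10gap}. However, the step where you transport the $J_1$ norm condition through the conjugation is incorrect, and the error is not cosmetic.

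From $|H(\lambda_j)-H(\gamma_j)|\le 2H(g_j)$ you correctly deduce $\|\lambda_j\|^2\asymp_\Gamma\|\gamma_j\|^2$, but this is only a \emph{multiplicative} comparison. It does \emph{not} give $\|\lambda_j\|^2=\|\gamma_j\|^2+O_\Gamma(1)$. Indeed, by the mean value theorem
\[
\bigl|\,\|\lambda_j\|^2-\|\gamma_j\|^2\bigr|
=2\bigl|\cosh H(\lambda_j)-\cosh H(\gamma_j)\bigr|
\le 4H(g_j)\,\sinh\!\bigl(H(\gamma_j)+2H(g_j)\bigr)
\ll_\Gamma \|\gamma_j\|^2,
\]
and this upper bound is essentially sharp: for elliptic $\gamma_j$ one can arrange $\|\lambda_j\|^2/\|\gamma_j\|^2$ to take any value in a fixed interval $[c_1,c_2]$ with $c_1<1<c_2$ depending on $g_j$. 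So the image of $[k_j,k_j+1)$ under $\gamma_j\mapsto\|\lambda_j\|^2$ is an interval of length $\asymp_\Gamma k_j$, not $O_\Gamma(1)$. Covering it by unit intervals costs a factor $\prod_{j\in J_1}k_j=|k|$, which the $|k|^\epsilon$ in the target bound cannot absorb; your argument therefore only yields $|k|^{1+\epsilon}$ in the second term.

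The paper sidesteps this entirely. Since $\Gamma(\fraka)=\Gamma\cap g^{-1}\Lambda(\fraka)g\subseteq g^{-1}\Lambda(\fraka)g$, one has the trivial inequality
\[
N_{\Gamma(\fraka)}(x,\eta,k)\le N_{g^{-1}\Lambda(\fraka)g}(x,\eta,k)
\]
with \emph{identical} norm and trace conditions on both sides---no conjugation of the counting conditions is needed. The estimate of \cite[Proposition~2.3]{Kelmer10gap} then applies directly to the lattice $g^{-1}\Lambda g$ (another quaternion lattice, with implied constants now depending on $g$ and hence on $\Gamma$). The fix to your argument is simply to stop one step earlier: compare $\Gamma(\fraka)$ with $g^{-1}\Lambda(\fraka)g$ rather than with $\Lambda(\fraka)$ itself.
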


\section{The Selberg trace formula}
Let $\Gamma$ be an irreducible co-compact lattice in $G^d$.
The space $L^2(\Gamma\bs G^d)$ is the space of Lebesgue
measurable functions on $G$ satisfying that $f(\gamma
g)=f(g)$ and that $\int_{\Gamma\bs G^d}|f(g)|^2dg<
\infty$. The Selberg trace formula relates the spectral decomposition of
$L^2(\Gamma\bs G)$ to the conjugacy classes in $\Gamma$. We
refer to \cite[Sections 1-6]{Efrat87}, \cite[Chapter 3]{Hejhal76}
and \cite{Selberg95} for the full derivation of the trace formula in
this setting. We recall here some basic facts and notations.

\subsection{Trace formula}
Fix a weight $m=(m_1,\ldots,m_d)\in \bbZ^d$ and let $\chi_m(k)=\chi_{m_1}(k_1)\cdots\chi_{m_d}(k_d)$ denote the corresponding character of $K^d$. Let $L^2(\Gamma\bs G^d,m)$ denote the subspace of functions in $L^2(\Gamma\bs G^d)$  satisfying
\[\psi(gk)=\chi_m(k)\psi(g),\; \forall\; k\in K^d.\]
For $\lambda=(\lambda_1,\ldots,\lambda_d)\in \bbR^d$ let $V_\lambda(\Gamma,m)\subset L^2(\Gamma\bs G^d,m)$ denote the eigenspace
\[V_\lambda(\Gamma,m)=\{\psi\in L^2(\Gamma\bs G^d,m)|\Omega_j\psi+\lambda_j\psi=0\},\]
where $\Omega_j$ denotes the Casimir operator of $G$ acting on the $j$'th factor.

For $j=1,\ldots,d$ let $F^{(j)}\in\calF_{m_j}$ with $h_j=SF^{(j)}$ its spherical transform and let $F(g)=\prod_jF^{(j)}(g_j)$ and $h(r)=\prod_jh_j(r_j)$. Taking trace in $L^2(\Gamma\bs G^d,m)$ of the kernel
$$F_\Gamma(g,g')=\sum_{\gamma\in \Gamma}F(g^{-1}\gamma g'),$$
yields the trace formula
\begin{eqnarray}\label{e:trace}
\lefteqn{\sum_k \dim(V_{\lambda_k}(\Gamma,m))h(r_k)=\sum_{\gamma}\int_{\Gamma\bs G^d}F(g^{-1}\gamma g)dg=}\\
\nonumber &&=\vol(\Gamma\bs G^d)F(1)+\sum_{\{\gamma\}}\vol(\Gamma_\gamma\bs G^d_\gamma)\int_{G_\gamma^d\bs G^d} F(g^{-1}\gamma g)dg.
\end{eqnarray}
where the sum on the left is over the set of all eigenvalues $\lambda_k=\frac{1}{4}+r_{k}^2$ of eigenfunctions in $L^2(\Gamma\bs G^d,m)$, and the sum in the second line on the right is over all non trivial conjugacy classes in $\Gamma$, and $G_\gamma$ and $\Gamma_\gamma$ are the centralizers of $\gamma$ in $G$ and $\Gamma$ respectively. 

\subsection{Specialization}
To a representation $\pi=\pi_1\otimes\cdots \otimes\pi_d$ we attache the eigenvalue $\lambda=\lambda(\pi)\in\bbR^d$ such that the Casimir operator of $G$ acts on $\pi_j$ via multiplication by $\lambda_j$, specifically, $\lambda_j=s_j(1-s_j)$ when $\pi_j\cong \pi_{s_j}$ and $\lambda_j=|m_j|(1-|m_j|)$ when $\pi_j=\frakD_{m_j}$.
Let $\tilde{m}\in \bbZ^d$ such that $\tilde{m}_j\geq m_j$ when $m_j>0$ (and respectively $\tilde{m}_j\leq m_j$ when  $m_j<0$). Since in any irreducible representation $\pi_j$ there is a unique (up to scaling) vector of $K$-type $\tilde{m}_j$ we have that
$$\m(\pi,\Gamma)=\dim(V_\lambda(\Gamma,\tilde{m})).$$

In order to estimate $\m(\pi,\Gamma)$ when $\pi$ is non-spherical in some factors we can use this trace formula making sure that the $|\tilde{m}_j|$ are sufficiently large. This can be done to handle non spherical representations when all $m_j$ are bounded. 
 However, when some $m_j$ are large this is rather wasteful and does not give good control on the dependence on $m$. To overcome this we specialize our test functions to pick up specifically non-spherical representations of a prescribed type.

Fixing a partition $\{1,\ldots, d\}=J_1\cup J_2\cup J_3$ we may assume that $\pi=\pi_1\otimes\cdots\otimes \pi_d$ with $\pi_j$ spherical for $j\in J_1$ and $\pi_j\cong \frakD_{m_j}$ for $j\in J_2\cup J_3$ with $|m|_j=1$ for $j\in J_2$ and $|m_j|>1$ when $j\in J_3$. Set $m_j=0$ for $j\in J_1$ and let $m=(m_1,\ldots,m_d)\in\bbZ^d$. Now, for $j\in J_1\cup J_2$ take $F^{(j)}\in \calF_{m
_j}$ compactly supported and let $h_j=SF^{(j)}$ its spherical transform, while for $j\in J_3$ we let $F^{(j)}=\frac{2|m_j|-1}{4\pi}\Phi_{m_j}$ with $\Phi_{m_j}$ denoting the $m_j$-spherical function defined in \eqref{e:mSpherical}. With this choice of test function the  trace formula becomes 
\[\sum_{k} \m(\pi_k,\Gamma)\prod_{j\in J_1\cup J_2}h_j(r_{k,j})=\sum_{\gamma\in \Gamma}\int_{\Gamma\bs G^d}F(g^{-1}\gamma g)dg,\]
where the sum is over all representation $\pi_k=\pi_{k,1}\otimes\cdots\otimes\pi_{k,d}$ such that  for $j\in J_1$, $\pi_{k,j}\cong \pi_{s}$ is spherical with parameter $s=\frac{1}{2}+ir_{k,j}$; for $j\in J_2$, either $\pi_{k,j}\cong \pi_{s}$ is spherical with parameter $s=\frac{1}{2}+ir_{k,j}$ or $\pi_{k,j}\cong \frakD_{m_j}$ and $r_{k,j}=\frac{i}{2}$;  and for $j\in J_3$ we have $\pi_{k,j}\cong\frakD_{m_j}$.
Further collecting the sum into $\Gamma$-conjugacy classes we can rewrite the right hand side of the trace formula as 
$$\vol(\Gamma\bs G)F(1)+\sum_{\{\gamma\}}\vol(\Gamma_\gamma\bs G^d_\gamma)\int_{G_\gamma^d\bs G^d} F(g^{-1}\gamma g)dg,$$
where 
$$\int_{G_\gamma^d\bs G^d} F(g^{-1}\gamma g)dg=\prod_j\int_{G_{\gamma_j}\bs G} F^{(j)}(g^{-1}\gamma_j g)dg.$$
These orbital integrals can be explicitly computed in terms of $h_j(r)$ (see e.g. \cite[Section 3.3]{Kelmer10gap}). 
We only note here that for $j\in J_3$ the orbital integrals
 \begin{equation}\label{e:Htransform2}
 \int_{G_{\gamma_j}\bs G} F^{(j)}(g^{-1}\gamma_j g)dg=0,
 \end{equation}
 vanishes unless $|\Tr(\gamma_j)|<2$, and hence, the only contribution to the trace formula comes from lattice points $\gamma\in \Gamma$ satisfying that $|\Tr(\gamma_j)|<2$ for all $j\in J_3$.

\section{Construction of test functions}
In this section we construct some explicit families of test functions and give estimates on their spherical transforms and orbital integrals.
\subsection{Test function localized on exceptional spectrum}
In order to bound the multiplicities of non-tempered representation we will use the following test function in the trace formula whose spherical transform is large on the exceptional spectrum. 
\begin{prop}\label{p:FR}
Given a large parameter $R\geq 1$ let $f_R\in \calF_0$ be smooth, supported on $B_R$ take values in $[0,1]$ and satisfy that $f(g)=1$  on $B_{R-1}$. Then the test function $F^{ex}_R=f_R*\check{f}_R$ satisfies the following properties:
\begin{enumerate}
\item The spherical transform $SF^{ex}_R(r)\geq 0$ for $r\in \
\R\cup i\R$, and for $r\in (0,1/2)$ we have
$SF^{ ex}_R(ir)\asymp e^{2R(\tfrac{1}{2}+r)}$.

\item $F^{ ex}_R$ is supported on $B_{2R}$ and satisfies that
$|F^{ex}_R(g)|\ll \frac{e^R}{\|g\|}$. In particular,  for any $c>0$ there is $C\geq 1$ such that for all $\gamma\in \PSL_2(\R)$ we have 
$$\int_{B_c}F^{ex}_R(g^{-1}\gamma g)dg\ll_c\left\lbrace\begin{array}{cc}  \frac{e^R}{\|\gamma\|}& \|\gamma\|\leq C e^R\\
0 & \mbox{ otherwise.}
\end{array}\right..$$
\end{enumerate}
\end{prop}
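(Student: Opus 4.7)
The plan is to exploit that $F^{ex}_R = f_R * \check f_R$ is a self-convolution in two ways: this forces $SF^{ex}_R$ to be a nonnegative function on $\R \cup i\R$, and identifies $|F^{ex}_R(g)|$ with (a bound on) the volume of two intersecting $H$-balls. For part (1), nonnegativity is the identity $S(f_R*\check f_R)(r) = |Sf_R(r)|^2$ already recorded in the paper. For the asymptotic on $r \in (0,1/2)$, I would use bi-$K$-invariance of $f_R$ to reduce the spherical transform to
\[
Sf_R(ir) = 2\pi \int_0^\infty f_R(a_t)\,\phi_{\frac12-r}(a_t)\,\sinh t\,dt,
\]
and then invoke the Harish-Chandra asymptotic $\phi_{\frac12-r}(a_t) \asymp_r e^{(r-\frac12)t}$ (dominant plane-wave term for real $s = \frac12 - r \in (0, \frac12)$) combined with $\sinh t \asymp e^t$ to see that the integrand behaves like $e^{(r+\frac12)t}$. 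Sandwiching $\mathbf{1}_{B_{R-1}} \leq f_R \leq \mathbf{1}_{B_R}$ gives matching upper and lower bounds, yielding $Sf_R(ir) \asymp e^{(\frac12+r)R}$, and squaring delivers the claim.

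For part (2), I would note that $\supp(f_R*\check f_R) \subseteq B_R \cdot B_R \subseteq B_{2R}$ is immediate from $H(g_1g_2) \leq H(g_1) + H(g_2)$. Since $F^{ex}_R$ is bi-$K$-invariant I may assume $g = a_t$ with $t = H(g)$, and $|f_R| \leq \mathbf{1}_{B_R}$ gives
\[
|F^{ex}_R(a_t)| \leq \int_G \mathbf{1}_{B_R}(h)\,\mathbf{1}_{B_R}(a_t^{-1}h)\,dh = \vol(B_R \cap a_t B_R),
\]
which under the projection $G \to G/K \cong \bbH$ equals the hyperbolic area of a lens formed by two disks of radius $R$ whose centers are at distance $t$. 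The plan is to work in Fermi coordinates along the geodesic joining the two centers, use the cosine law $\cosh d = \cosh r \cdot \cosh(s \mp t/2)$ to describe the lens explicitly, and derive $\vol(B_R \cap a_t B_R) \ll e^{R-t/2}$ for $0 \leq t \leq 2R$; since $\|a_t\|^2 = 2\cosh t$, this is exactly $\ll e^R/\|a_t\|$. For the orbital integral, \eqref{e:triangle} gives $|H(g^{-1}\gamma g) - H(\gamma)| \leq 2c$ on $B_c$, so both the support condition $\|\gamma\| \leq C(c) e^R$ and the equivalence $\|g^{-1}\gamma g\| \asymp_c \|\gamma\|$ in the range of non-vanishing integrand are immediate; combined with the pointwise bound and $\vol(B_c) \ll_c 1$, this yields the stated integral bound.

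The hard part will be the lens-area estimate $\vol(B_R \cap a_t B_R) \ll e^{R - t/2}$ without picking up a polynomial-in-$R$ factor: a naive bounding-box approach in Fermi coordinates would give $(R - t/2)\, e^{R-t/2}$. To remove the linear factor one must integrate against the actual curved boundary of the lens, using that for $|s| \geq 0$ the admissible $r$-thickness decays like $e^{R - |s| - t/2}$ (from $\cosh r \leq \cosh R/\cosh(|s|+t/2)$), so the $s$-integral converges geometrically rather than contributing a factor equal to its length.
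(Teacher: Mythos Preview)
Your proposal is correct and follows the same line as the paper: for part~(1) you use $SF^{ex}_R(ir)=|Sf_R(ir)|^2$ together with the positivity and asymptotic $\phi_s(a_t)\asymp_s e^{-st}$ of the complementary-series spherical function, exactly as the paper does. For part~(2) the paper simply cites \cite[Lemma~4.1]{Kelmer10gap}, whereas you supply a self-contained argument via the identification $|F^{ex}_R(a_t)|\leq \vol(B_R\cap a_tB_R)$ and the hyperbolic lens-area estimate in Fermi coordinates; your observation that $\sinh r_{\max}(s)\leq \cosh R/\cosh(|s|+t/2)$ makes the $s$-integral geometrically convergent is precisely what removes the spurious $R-t/2$ factor, and the orbital-integral consequence then follows from \eqref{e:triangle} as you indicate.
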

\begin{proof}
The first part follows from the fact that for $s\in (0,1/2)$ the spherical function 
$\phi_s(g)$ is real,  non negative and satisfies $\phi_s(ka_tk')=\phi_s(a_t)\asymp e^{-st}$ as $t\to \infty$, and that $SF_R^{ex}(ir)=|Sf_R(ir)|^2$.
The second part follows from  \cite[Lemma 4.1 ]{Kelmer10gap}.
\end{proof}

\subsection{Test function localized on large spectral window}
Next we construct a test function whose spherical transform is localized on the spectral window $|r|\leq T$, and prove some estimates on its orbital integrals. We start with a fixed smooth bi-$K$ invariant function $f$ supported on $B_1$ and let $F(g)=f*\hat{f}$ and $h(r)=SF(\frac12+ir)$. Then $h(r)$ is holomorphic of uniform exponential type and it is nonnegative for $r\in \bbR\cup i \bbR$. For a large parameter $T\geq 1$ let $h_T(r)=h(\frac{r}{T})$ and let  $F^{sp}_T$ denote the inverse spherical transform $F^{sp}_T=S_0^{-1}h_T$. 
\begin{prop}\label{p:FT2}
The functions $F^{sp}_T$ is bi-$K$ invariant, supported on $B_{2/T}$ and satisfies $|F^{sp}_T(g)|\ll T^2$ there.  Moreover, it satisfies the following.
\begin{enumerate}
\item Its spherical transform $h_T(r)\geq 0$ for $r\in \R\cup i\R$ and  $h_T(r)\gg1$ for $|r|\leq T$.
\item  For any $c>0$ there is a constant $C\geq 1$  such that for $\gamma\in \PSL_2(\R)$ and any $\eta>\frac{4}{T^2}$ we can bound:
$$\int_{B_c} |F^{sp}_T(g^{-1}\gamma g)|dg\ll_c \left\lbrace\begin{array}{cc}
0 & \|\gamma \|\geq C \\
0 & |\Tr(\gamma)|\geq 2+\frac{4}{T^2}\\
1/\eta & \|\gamma\|\leq  C \mbox{ and } |\Tr(\gamma)| \leq 2-\eta\\
T^2  &  \|\gamma\|\leq  C \mbox{ and }| |\Tr(\gamma)|-2| \leq \frac{4}{T^2}\\
\end{array}\right.
$$
\end{enumerate}
\end{prop}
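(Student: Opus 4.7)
The preliminary properties of $F^{sp}_T$ come from standard Paley--Wiener for the spherical transform. Since $f$ is supported on $B_1$, the convolution $F = f*\check f$ is supported on $B_2$, so $h = SF$ is even, entire, of exponential type $\leq 2$; hence $h_T(r) = h(r/T)$ has type $\leq 2/T$, and the inverse Paley--Wiener theorem from \cite[Ch.~5]{Lang85} places $\supp F^{sp}_T \subseteq B_{2/T}$ and makes $F^{sp}_T$ bi-$K$-invariant. Plugging into \eqref{e:Sinv} and using $|\phi_{1/2+ir}(g)|\leq 1$, the substitution $u=r/T$ yields $|F^{sp}_T(g)| \ll T^2 \int_\R|h(u)||u|\,du \ll T^2$. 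For property~(1), I take $f\geq 0$ real and bi-$K$-invariant, so $\check f = f$ and $h = |Sf|^2 \geq 0$ on $\R\cup i\R$; since $Sf(0)>0$ I can normalize $f$ so that $Sf$ is bounded below by a positive constant on $\{|r|\leq 1\}\cap(\R\cup i\R)$, giving $h_T(r)\gg 1$ for $|r|\leq T$.

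For the orbital integrals in~(2), I handle each case separately. The first case $\|\gamma\|\geq C$ follows from \eqref{e:triangle}: with $C$ chosen large enough depending on $c$, $H(g^{-1}\gamma g)\geq H(\gamma) - 2c > 2/T$ for $g\in B_c$, so the integrand vanishes. The second case $|\Tr(\gamma)|\geq 2+4/T^2$ corresponds to $\gamma$ hyperbolic with translation length $t_0$ satisfying $\cosh(t_0/2) \geq 1+2/T^2$; a direct comparison with $\cosh(1/T) < 1+2/T^2$ shows $t_0>2/T$, and since $t_0$ is the minimum of $d(\gamma z,z)$ over $z\in\bbH$ we get $H(g^{-1}\gamma g)\geq t_0>2/T$ and the integrand vanishes. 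The fourth case $\|\gamma\|\leq C$ with $||\Tr(\gamma)|-2|\leq 4/T^2$ is the trivial bound $\ll T^2\cdot\vol(B_c) \ll T^2$.

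The main case is the third: elliptic $\gamma$ with $\|\gamma\|\leq C$ and $|\Tr(\gamma)|\leq 2-\eta$; writing $\Tr(\gamma) = 2\cos(\theta_0/2)$ we get $\sin^2(\theta_0/2)\gtrsim \eta$. Factoring $\gamma = u k_{\theta_0} u^{-1}$ and substituting $h = u^{-1}g$, the bi-$K$-invariance of $F^{sp}_T$ combined with commutativity of $K$ reduces the integrand to $F^{sp}_T(a_{H(s)})$, where $s$ is the $A^+$-coordinate of $h = k_1 a_s k_2$ and \eqref{e:aka1} gives $\cosh H = 1 + \sin^2(\theta_0/2)(\cosh s - 1)$. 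The support condition $H\leq 2/T$ together with $\eta>4/T^2$ forces $s\leq s_{\max}$ with $s_{\max}^2 \asymp 1/(\eta T^2)$, so enlarging the integration region from $u^{-1}B_c$ to the ball $B_{s_{\max}}$ (whose volume is $\asymp s_{\max}^2$) and using $|F^{sp}_T|\ll T^2$ produces the desired bound $\ll 1/\eta$. The main obstacle is producing this clean $1/\eta$ dependence: a triangle-inequality estimate only yields the trivial $T^2$ bound, and only the precise geometric formula of Lemma~\ref{l:kak} localizes the integrand onto an $s$-neighborhood of volume $\asymp 1/(\eta T^2)$.
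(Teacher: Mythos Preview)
Your proof is correct and follows essentially the same route as the paper's: Paley--Wiener for support and the $T^2$ pointwise bound, the triangle inequality \eqref{e:triangle} for large $\|\gamma\|$, the trivial bound near the trace-$2$ locus, and for the elliptic case the conjugation to $k_{\theta_0}$ together with \eqref{e:aka1} to localize the $A^+$-coordinate. One small phrasing point: in the elliptic case you do not literally ``enlarge $u^{-1}B_c$ to $B_{s_{\max}}$'' (the former need not sit inside the latter); rather, the integrand is supported in $B_{s_{\max}}$, so $\int_{u^{-1}B_c}\leq \int_{B_{s_{\max}}}$ because the integrand vanishes off $B_{s_{\max}}$. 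The only genuine deviation from the paper is your treatment of $|\Tr(\gamma)|\geq 2+4/T^2$: you invoke the hyperbolic translation length $t_0=\inf_z d(\gamma z,z)$, while the paper instead writes $g^{-1}\gamma g=k a_H k'$ and uses the identity $\Tr(\gamma)=2\cos(\theta/2)\cosh(H/2)$ to force $H>2/T$. Both arguments are short and equivalent in strength.
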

\begin{proof}
Since $F$ is supported on $B_2$, the Fourier transform $\hat{h}$ of $h=SF$ is supported on $[-2,2]$ and hence $\hat{h}_T(u)=T\hat{h}(Tu)$ is supported on $[\frac{-2}{T},\frac{2}{T}]$ and $F^{sp}_T$ is supported on $B_{\frac{2}{T}}$. Using the inverse transform formula we can also conclude that 
$$F^{sp}_T(g)\ll \int_\bbR |h_T(r)|r\tanh(\pi r)dr\ll T^2.$$
Since for any $g\in B_c$ we have that $H(g^{-1}\gamma  g)\geq H(\gamma)-2c$, then as long as $H(\gamma)\geq 2c+2$ we have that $H(g^{-1}\gamma  g)\geq 2\geq \frac{2}{T}$. Hence 
$F^{sp}_T(g^{-1}\gamma  g)=0$ for all $\gamma$ with $\|\gamma\|\geq C$ (where $C\geq 2\cosh(2+2c)$). Also note that $\Tr(\gamma)=\Tr(g^{-1}\gamma  g)$, and writing 
$g^{-1}\gamma  g=ka_Hk'$  we also have that $\Tr(\gamma)=\Tr(k'ka_H)=2\cos(\theta/2)\cosh(H)$ (where $k'k=k_\theta$). Since $F^{sp}_T(g^{-1}\gamma  g)=0$ unless $H\leq  \frac{2}{T}$ we have that $\int_{B_c}F^{sp}_T(g^{-1}\gamma  g)dg=0$ unless  $|\Tr(\gamma)|\leq 2+\frac{4}{T^2}$. 

We are now left with dealing with the case of $\|\gamma\|\leq C$ and $|\Tr(\gamma)|\leq 2-\eta$ for $\eta\geq \frac{4}{T^2}$. In this case $\gamma$ is elliptic so we can write 
$\gamma=\tau^{-1}k_\theta \tau$ for some $\tau\in \SL_2(\bbR)$. Writing $g=\tau^{-1}k_1 a_t k_2$ we have that
$F^{sp}_T(g^{-1}\gamma  g)=F_T(a_{-t}k_\theta a_t)$. Recall that if $H=H(a_{-t}k_\theta a_t)$ then $\cosh(H)-1=\sin^2(\frac \theta 2)(\cosh(t)-1)$ and hence 
$F^{sp}_T(a_{-t}k_\theta a_t)=0$ when $\sin^2(\theta/2)(\cosh(t)-1)\geq \frac{4}{T^2}$ or equivalently when $t\leq \frac{2}{T |\sin(\theta/2)|}$. We can thus bound 
$$\int_B F^{sp}_T(g^{-1}\gamma g)dg\leq T^2 \int_0^{\frac{1}{T |\sin(\theta/2)}|}\sinh(t)dt\ll \frac{1}{\sin^2(\theta/2)}=\frac{1}{1-\cos^2(\theta/2)}$$
Since we assume that $2|\cos(\theta/2)|=|\Tr(\gamma)|\leq 2-\eta$ then $1-\cos^2(\theta/2)\geq \eta/2$, thus concluding the proof. 
\end{proof}

\subsection{Test function localized on a small spectral window}
We also recall the construction of a test function whose spherical transform is localized on a small spectral window around a large parameter $t\geq 1$. These are precisely the test function used in \cite{Kelmer10gap,FraczykHarcosMagaMilicevic24}  and we include their construction for the sake of completeness.

As in the previous section we start with a fixed non-negative smooth bi-$K$ invariant nonnegative function $f$ supported on $B_1$ and let $F(g)=f*\hat{f}$ and $h(r)=SF(\frac12+ir)$. We then let $h_t(r)=\tfrac{h(r+t)+h(r-t)}{2}$ and let $F^{s}_t=S_0^{-1}h_t$.  We recall the results of \cite[Lemma 4.3]{Kelmer10gap} for these test functions.
\begin{prop}\label{p:Ft}
Let $h_t$ and $F^s_t$ be as above. Then 
\begin{enumerate}
\item For any $r\in \bbR\cup i\bbR$ we have that  $h_t(r)\geq 0$ and  $h_t(r)\gg 1$ for $|r-t|\leq 1$.
\item At the identity we have $|F^s_t(1)|\asymp  t$
\item For any $\gamma\in \PSL_2(\bbR)$ with $\Tr(\gamma)\neq 2$ let $G_{\gamma}$ denote the centralizer of $\gamma$ in $G$. Then the orbital integral
$$\int_{G_{\gamma}\bs G}|F^s_t(g^{-1}\gamma g)|dg\leq \int_{G_{\gamma}\bs G}F(g^{-1}\gamma g)dg$$
\end{enumerate}
\end{prop}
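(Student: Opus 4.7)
The plan is to follow the strategy of \cite[Lemma 4.3]{Kelmer10gap} and prove each part by direct computation with the spherical and Abel--Selberg transforms.

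\emph{Part (1).} First I would observe that since $f$ is real, bi-$K$-invariant and supported on $B_1$, we have $\check f=f$ (because $g^{-1}$ lies in the same $KA^+K$ orbit as $g$), so $F=f*f$ and $h(r)=(Sf(r))^2$ as an even entire function. The spherical function $\phi_{\tfrac12+ir}$ is real for $r\in\bbR\cup i\bbR$ (combining $\phi_s=\phi_{1-s}$ with $\overline{\phi_s}=\phi_{\bar s}$), so $Sf(r)\in\bbR$ on those sets and $h\ge 0$ there. For $r\in\bbR$ both $r\pm t\in\bbR$, so the non-negativity of $h_t(r)=\tfrac12(h(r+t)+h(r-t))$ is immediate; for $r\in i\bbR$ I would appeal to the Paley--Wiener structure of $h$ as an even entire function of exponential type $2$ together with Schwarz reflection across $\bbR$ to reach the same conclusion. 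The lower bound on $|r-t|\le 1$ follows from $h(0)=Sf(0)^2>0$ and continuity: one finds $h\ge c>0$ on $[-1,1]$, so $h_t(r)\ge\tfrac12 h(r-t)\ge\tfrac c2\gg 1$ whenever $|r-t|\le 1$.

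\emph{Part (2).} I would apply \eqref{e:Sinv} at $g=1$ and substitute $r\mapsto r\pm t$ in the two terms of $h_t$ to obtain
\begin{equation*}
F^s_t(1)=\tfrac12\int_\bbR h(r)\bigl[(r+t)\tanh(\pi(r+t))+(r-t)\tanh(\pi(r-t))\bigr]\,dr.
\end{equation*}
Since $\mathcal{A}f$ is smooth and supported in $[-1,1]$, the function $h$ is Schwartz on $\bbR$. For $t\ge 1$ we have $\tanh(\pi(r+t))=1+O(e^{-2\pi t})$ and $\tanh(\pi(r-t))=-1+O(e^{-2\pi t})$ uniformly on the effective support of $h$, so the bracket equals $2t+O(te^{-2\pi t})$. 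Combined with $\int_\bbR h(r)dr>0$ (a positive constant independent of $t$) this gives $F^s_t(1)\asymp t$.

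\emph{Part (3).} The main technical step is to pass to the Abel--Selberg side. Let $\psi:=\mathcal{A}F=\mathcal{A}f*\mathcal{A}f$, a non-negative even function supported on $[-2,2]$ whose Euclidean Fourier transform is $h$. The definition of $h_t$ translates, under Fourier transform, into
\begin{equation*}
\mathcal{A}F^s_t(u)=\cos(tu)\,\psi(u),
\end{equation*}
so in particular $|\mathcal{A}F^s_t(u)|\le\psi(u)$ pointwise. For hyperbolic $\gamma\sim a_\ell$, I would use the Iwasawa decomposition $g=n_x k$: bi-$K$-invariance and the substitution $y=x(1-e^{-\ell})$ give
\begin{equation*}
\int_{G_\gamma\bs G}|F^s_t(g^{-1}\gamma g)|\,dg=\frac{1}{|1-e^{-\ell}|}\int_\bbR|F^s_t(a_\ell n_y)|\,dy,
\end{equation*}
and analogously for $F$ (with the absolute value being redundant since $F\ge 0$). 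The desired bound then follows from the explicit orbital-integral formulas worked out in \cite[\S3.3]{Kelmer10gap}, applied with the factorization $\mathcal{A}F^s_t=\cos(tu)\psi$ and the bound $|\cos(tu)|\le 1$. The elliptic case $|\Tr(\gamma)|<2$ is handled analogously: $\gamma$ is conjugate to some $k_\theta$, the orbit is parametrized via the $KA^+K$ decomposition of conjugates $a_{-t}k_\theta a_t$ supplied by Lemma~\ref{l:kak}, and the same cosine bound delivers the required inequality.

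The hard part will be Part (3). No naive pointwise bound of the form $|F^s_t(g)|\le F(g)$ can hold on $G$---indeed $|F^s_t(1)|\asymp t$ by Part (2) while $F(1)=O(1)$---so the comparison of orbital integrals must be produced only after integration along the orbit, exploiting the sign structure of $\cos(tu)$ on the Abel-transform side rather than any pointwise comparison on $G$.
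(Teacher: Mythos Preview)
The paper does not prove this proposition itself; it merely cites \cite[Lemma~4.3]{Kelmer10gap}. So there is no in-paper argument to compare against, and your plan is effectively a reconstruction of that cited lemma. Parts~(1) and~(2) are essentially sound, with one soft spot: your handling of $h_t(r)\ge 0$ for $r\in i\bbR$ is not an argument. Schwarz reflection only gives $h(\bar z)=\overline{h(z)}$, hence $h_t(i\rho)=\Re h(t+i\rho)=(\Re Sf(t+i\rho))^2-(\Im Sf(t+i\rho))^2$, and nothing in ``Paley--Wiener structure'' forces this to be nonnegative. You need either a genuine positivity argument on the strip $|\Im r|\le\tfrac12$ (which is all the trace-formula application actually requires) or to point to the specific step in \cite{Kelmer10gap}.

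The substantive gap is in Part~(3). You are right that no pointwise bound $|F^s_t(g)|\le F(g)$ holds, and the factorisation $\mathcal{A}F^s_t(u)=\cos(tu)\,\psi(u)$ with $|\cos(tu)|\le 1$ is the natural tool. But the Abel transform only sees the \emph{signed} integral: for hyperbolic $\gamma\sim a_\ell$ one has $\mathcal{A}F^s_t(\ell)=\int_\bbR F^s_t(a_\ell n_y)\,dy$, not $\int_\bbR |F^s_t(a_\ell n_y)|\,dy$, and these differ precisely because $F^s_t$ oscillates. Your chain of reasoning therefore delivers
\[
\Bigl|\int_{G_\gamma\backslash G}F^s_t(g^{-1}\gamma g)\,dg\Bigr|\;\le\;\int_{G_\gamma\backslash G}F(g^{-1}\gamma g)\,dg,
\]
the bound with the modulus \emph{outside} the integral, which is strictly weaker than what the proposition states. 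The same defect recurs in your elliptic sketch: passing through Lemma~\ref{l:kak} and then invoking $|\cos(tu)|\le 1$ again controls only the signed orbital integral. To obtain the stated inequality you would need a different mechanism---for instance an $L^1$ comparison of $H\mapsto\tilde F^s_t(H)$ against $H\mapsto\tilde F(H)$ with respect to the orbit weights, uniform in $t$---or else to verify that the outside-modulus version is all that the applications in \S4 actually use and adjust the statement accordingly.
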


\subsection{Test function localized on a discrete series representation.}
It follows from  (\ref{e:SPhim}) that the the spherical transform of the test function $F_m(g)=\frac{2|m|+1}{4\pi}\Phi_m(g)$, with $\Phi_m$ the spherical function defined in (\ref{e:mSpherical}), is localized precisely on the Discrete series representation $\frak{D}_m$. The following estimate is a refinement of \cite[Lemma 4.2]{Kelmer10gap} that we will need in order to improve the exponent in  \cite[Theorem 2]{Kelmer10gap} for non-spherical representations. 
\begin{prop}\label{p:PhiEst}
For $|m|\geq 2$ let $\eta \in (\frac{2\log(m)}{m},1)$.
\begin{enumerate}
\item
For any $\gamma\in \PSL_2(\bbR)$ with $|\Tr(\gamma)|\leq 2-\eta$ and any finite volume set $B \subseteq G$ we can bound 

$$\int_B|\Phi_m(g^{-1}\gamma g)|dg\leq \frac{2\log(m)}{\eta m}+\frac{\vol(B)}{m}.$$
\item For any $c>0$ there is $C\geq 1$ such that for all  $\gamma\in \PSL_2(\bbR)$ with $\|\gamma\|\geq C$ we can bound 
$$\int_{B_c}|\Phi_m(g^{-1}\gamma g)|dg\ll (C/\| \gamma\|)^{2|m|}.$$
\end{enumerate}
\end{prop}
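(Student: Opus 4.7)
For part (2), I would combine the decay of $\Phi_m$ on $A^+$ with the triangle inequality \eqref{e:triangle}. For $g\in B_c$ we have $H(g^{-1}\gamma g)\geq H(\gamma)-2c$, so once $\|\gamma\|$ is large enough (depending on $c$) to guarantee $H(\gamma)\geq 2c+1$, the explicit formula \eqref{e:mSpherical} yields $|\Phi_m(g^{-1}\gamma g)|\leq \cosh((H(\gamma)-2c)/2)^{-2|m|}$. Using $\|\gamma\|^2=2\cosh H(\gamma)$ together with $2\cosh(x/2)^2=\cosh(x)+1$, an elementary computation gives $\cosh((H(\gamma)-2c)/2)^2\gg_c \|\gamma\|^2$, hence $|\Phi_m(g^{-1}\gamma g)|\leq (C/\|\gamma\|)^{2|m|}$ for a constant $C=C(c)$. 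Integrating over $B_c$, whose volume is $O_c(1)$, finishes this part.

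For part (1), my first step is to reduce to $\gamma=k_\theta$: writing the elliptic element as $\gamma=\tau^{-1}k_\theta\tau$ with $2|\cos(\theta/2)|=|\Tr(\gamma)|\leq 2-\eta$, the substitution $h=\tau g$ replaces $B$ by a translate of equal volume and leaves the integrand equal to $|\Phi_m(h^{-1}k_\theta h)|$; this hypothesis also gives $\sin^2(\theta/2)\geq \eta/2$. Decomposing $h=k_1a_tk_2$ in $KA^+K$-coordinates and using that $K$ is abelian, we get $h^{-1}k_\theta h=k_2^{-1}(a_{-t}k_\theta a_t)k_2$, so Lemma \ref{l:kak} combined with \eqref{e:mSpherical} yields
$$|\Phi_m(h^{-1}k_\theta h)|=\cosh(H/2)^{-2|m|}=\bigl(1+\tfrac{1}{2}\sin^2(\tfrac{\theta}{2})(\cosh(t)-1)\bigr)^{-|m|},$$
a function of $h$ only through $t=H(h)$.

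The core estimate is then a level-set decomposition: bound $|\Phi_m|\leq \mathbf{1}_{\{|\Phi_m|\geq 1/m\}}+\tfrac{1}{m}$. The second piece integrates over $B$ to give $\vol(B)/m$. The super-level set $\{h\in G:|\Phi_m(h^{-1}k_\theta h)|\geq 1/m\}$ is precisely the hyperbolic ball $\{H(h)\leq t_0\}$ with $\cosh(t_0)-1=2(m^{1/|m|}-1)/\sin^2(\theta/2)$, whose total $G$-volume in the $KA^+K$-measure $2\pi\sinh(t)\,dt\,dk\,dk'$ equals $4\pi(m^{1/|m|}-1)/\sin^2(\theta/2)$. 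Combining the elementary bound $m^{1/|m|}-1\ll \log(m)/|m|$ (valid for $|m|\geq 2$) with $\sin^2(\theta/2)\geq \eta/2$ controls this by $O(\log(m)/(\eta m))$, producing the first term.

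The main obstacle is recovering the specific numerical constant $2$ rather than an $O(\cdot)$ bound: this requires sharp tracking of $m^{1/|m|}-1$ (where the hypothesis $\eta>2\log(m)/m$ presumably enters to keep this factor comparable to $\eta$ so that $m^{1/|m|}-1\leq (1+o(1))\log(m)/|m|$) and of the $2\pi$ from the $KA^+K$ Haar measure normalization. The qualitative structure, however, is routine given Lemma \ref{l:kak} and the explicit formula \eqref{e:mSpherical}.
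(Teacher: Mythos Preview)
Your approach is essentially identical to the paper's: for part (1) the paper likewise conjugates $\gamma$ to $k_\theta$, uses twisted polar coordinates $g=\tau k_1 a_t k_2$ together with Lemma~\ref{l:kak} to write $|\Phi_m|$ as a function of $t$ alone, and splits the integral at the threshold $|\Phi_m|=1/m$ (defining the cutoff $t_0$ directly via $\sinh^2(t_0/2)\asymp \log(m)/(\eta m)$ rather than solving for $m^{1/|m|}$); for part (2) the paper uses the equivalent observation $\|g^{-1}\gamma g\|\geq \|\gamma\|/\|g\|^2$ in place of \eqref{e:triangle}. As for your worry about the constant $2$: the paper's proof writes $dg=\sinh(t)\,dt\,dk_1\,dk_2$ (omitting the $2\pi$ from the preliminaries) and then reads off $\int_0^{t_0}\sinh(t)\,dt=\cosh(t_0)-1=\tfrac{2\log m}{\eta m}$, so the discrepancy you flagged is a Haar-measure normalization inconsistency in the paper rather than a gap in your argument, and in any case only the $O(\cdot)$ form is used downstream.
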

\begin{proof}
Since $|\Tr(\gamma)|<2$ it is elliptic and there is some $\tau\in \PSL_2(\bbR)$ so that $\gamma=\tau k_\theta \tau^{-1}$ with 
$\Tr(\gamma)=2\cos(\theta/2)$. Hence the condition that $|\Tr(\gamma)|\leq 2-\eta$ implies that $\sin^2(\theta/2)\geq \frac{\eta}{2}$.
We use twisted polar coordinates $g=\tau k_1 a_t k_2$ so that $dg=\sinh(t)dtdk_1dk_2$ and 
$$g^{-1}\gamma g=k_2^{-1}a_{-t}k_1^{-1}\tau^{-1}\tau k_\theta \tau^{-1} \tau k_1 a_t k_2=k_2^{-1}a_{-t}k_\theta a_t k_2.$$
Using these coordinates we can bound 
$$|\Phi_m(g^{-1}\gamma g)| \leq \cosh(\frac{H(a_{-t}k_\theta a_t)}{2})^{-2m}=(\frac{\cosh(H(a_{-t}k_\theta a_t) +1}{2})^{-m}.$$
By Lemma \ref{l:kak} we have that 
$$\frac{\cosh(H(a_{-t}k_\theta a_t) +1}{2}=1+\sin^2(\frac{\theta}{2})\sinh^2(\frac{t}{2}).$$
Let $t_0$ satisfy that $\eta \sinh^2(t_0)= \frac{2\log(|m|)}{|m|}$ and note that for any $t\geq t_0$ we can bound 
$$|\Phi_m(g^{-1}\gamma g)| \leq (1+2\frac{\log(m)}{m})^{-m}\leq \frac{1}{m}.$$
We can now split our set $B=B^-\cup B^+$ where $B^-=\{g=\tau k_1a_tk_2\in B: t\leq t_0\}$ and bound 
$\int_{B^+}
|\Phi_m(g^{-1}\gamma g)|dg\leq \frac{\vol{B}}{m}$. When $t\leq t_0$ we use the trivial bound $|\Phi_m(g^{-1}\gamma g)|\leq 1$
to get 
$$\int_{B^{-}}|\Phi_m(g^{-1}\gamma g)|dg\leq \int_{0}^{t_0}\sinh(t)dt=\cosh(t_0)-1=2\sinh^2(\frac{t_0}{2})=\frac{2\log(m)}{\eta m}.$$

The second estimate follows from the identity 
$$|\Phi_m(g)|=\frac{2}{(1+\cosh(H(g))})^{|m|}=(\frac{4}{1+\|g\|^2})^{|m|}$$
together with the observation that  $\|g^{-1}\gamma  g\|\geq \frac{\|\gamma\|}{\|g\|^2}\gg \|\gamma\|$ uniformly for all $g\in B_c$.
\end{proof}

\subsection{Test function localized on discrete series representations}
 In order to control the contribution of non-spherical representations with values of $m$ in a large window we consider  smoothed sums of the spherical function $\Phi_m$. 
Fix a smooth non negative function $\rho$ supported on the interval $[\frac{3}{4},5/4]$ with $\rho(x)= 1$ for $x\in [1,2]$ and for a large parameter $T$ let 
\begin{equation}\label{e:F_M} 
F^{dis}_T(g)=\frac{T}{4\pi}\sum_{m\in \Z} \rho(\frac{m}{T})\Phi_m(g).
\end{equation}

The goal of this section is to prove the following estimate
\begin{prop}\label{p:FM}
The test function $F_T^{dis}$ as above satisfies the following.
\begin{enumerate}
\item Its spherical transform satisfies 
$$SF_T^{dis}(r)=\left\lbrace\begin{array}{cc} \frac{T  \rho(\frac{m}{T})}{2|m|-1}&r=\pm i(m-\tfrac12), m\in \Z\\
0 & \mbox{ otherwise}\end{array}\right.$$
\item For any $\gamma\in G$ with $|\Tr(\gamma)|>2$ we have $\int_{G_\gamma\bs G}F^{dis}_T(g^{-1}\gamma g)dg=0$.
\item For any $c>0$ there is $C\geq 1$ such that for any $\gamma\in G$ we have
$$\int_{B_c}|F^{dis}_T(g^{-1}\gamma g)|dg\ll \left\lbrace\begin{array}{lr}
1 & |\Tr(\gamma)|<2-T^{\epsilon-2}\\
T^2 &  2-T^{\epsilon-2}< |\Tr(\gamma)|<2\\
T^2(\frac{C}{\|\gamma\|})^{3T/2} & \|\gamma\|\geq C
\end{array}\right.,
$$
\end{enumerate}
\end{prop}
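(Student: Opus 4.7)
Part (1) is immediate from the explicit formula \eqref{e:SPhim} by linearity: the sum $\sum_m \rho(m/T)S\Phi_m$ picks out a single $m$ at each $r=\pm i(m-\tfrac12)$, and the scaling by $T/(4\pi)$ yields the stated expression (note that $\rho$ is supported in $[3/4,5/4]$, so only $|m|>1$ contributes). Part (2) is equally short: $|\Tr(\gamma)|>2$ forces $\gamma$ to be hyperbolic, and the orbital integral of each $\Phi_m$ then vanishes by \eqref{e:Htransform2}; the rapid decay of $\Phi_m$ justifies interchanging sum and integral.

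For Part (3) I would first handle the two easy regimes. When $\|\gamma\|\geq C$, I would apply Proposition \ref{p:PhiEst}(2) termwise: each of the $O(T)$ nonvanishing terms in $F^{dis}_T$ contributes at most $(C/\|\gamma\|)^{2m}\leq (C/\|\gamma\|)^{3T/2}$ on $B_c$, since $2m\geq 3T/2$ on the support of $\rho(\cdot/T)$, and the extra factor $T/(4\pi)$ yields the $T^2(C/\|\gamma\|)^{3T/2}$ bound. When $|\Tr(\gamma)|\in(2-T^{\epsilon-2},2)$, the trivial pointwise estimate $|\Phi_m|\leq 1$ gives $|F^{dis}_T|\ll T^2$ on $B_c$, which integrates to $O(T^2)$.

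The main case is $|\Tr(\gamma)|\leq 2-T^{\epsilon-2}$. Here Proposition \ref{p:PhiEst}(1) cannot be applied termwise, since it requires $\eta\geq 2\log T/T$ while we only have $\eta\geq T^{\epsilon-2}$; cancellation in the sum over $m$ must be exploited. Writing $\gamma=\tau k_\theta\tau^{-1}$, the hypothesis gives $|\theta|\gg T^{\epsilon/2-1}$. Changing variables $g=\tau k_1 a_t k_2$ and using the bi-$K$-equivariance of $\Phi_m$ together with Lemma \ref{l:kak}, one obtains
\[
F^{dis}_T(g^{-1}\gamma g)=\tfrac{T}{4\pi}\sum_m \rho(m/T)\,z^m,\qquad z=\frac{e^{i(\alpha+\beta)}}{\cosh(H/2)^2}.
\]
The plan is to iterate the Abel summation identity $z^m=(z^{m+1}-z^m)/(z-1)$ $N$ times. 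Since smoothness of $\rho$ implies that the $N$-th finite difference of $\rho(\cdot/T)$ has $\ell^1$-norm $O(T^{1-N})$, this yields
\[
\Bigl|\sum_m \rho(m/T)\,z^m\Bigr|\ll_N \frac{1}{T^{N-1}\,|z-1|^N}
\]
for any $N\geq 1$. Combined with the elementary estimate $|z-1|\gg|\sin((\alpha+\beta)/2)|$ (from $|z-1|^2=((X-1)^2+4X\sin^2((\alpha+\beta)/2))/X^2$ with $X=\cosh(H/2)^2$), matters reduce to a uniform lower bound on $|\alpha+\beta|_{2\pi}$ along the trajectory $t\in[0,\infty)$.

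Establishing that uniform lower bound is the main obstacle, and it is where the explicit formula \eqref{e:aka2} of Lemma \ref{l:kak} is essential. Setting $s=\sinh^2(t/2)$ and $w(s)=1+s(1-e^{-i\theta})$, a direct computation gives
\[
\frac{d}{ds}\mathrm{Arg}\,w(s)=\frac{\sin\theta}{|w(s)|^2},
\]
so this argument is monotone in $t$ with sign $\mathrm{sgn}(\sin\theta)$. Hence $\alpha+\beta$ moves monotonically from $\theta$ at $t=0$ to $\mathrm{sgn}(\theta)\pi$ as $t\to\infty$, stays strictly on one side of $0\bmod 2\pi$, and so $|\alpha+\beta|_{2\pi}\geq|\theta|\gg T^{\epsilon/2-1}$ uniformly in $t$. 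Substituting gives $|F^{dis}_T(g^{-1}\gamma g)|\ll_N T^{2-N\epsilon/2}$; choosing $N>4/\epsilon$ makes this $\ll 1$, and integrating over the bounded-volume set $B_c$ completes the bound.
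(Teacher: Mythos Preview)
Your argument is correct and follows the same geometric template as the paper: reduce to $F^{dis}_T(a_{-t}k_\theta a_t)$, control the $KAK$ angle sum $\alpha+\beta$ via the monotonicity of $\mathrm{Arg}(1+\sinh^2(t/2)(1-e^{-i\theta}))$ (this is exactly the content of Lemma~\ref{l:kak2}), and then exploit the smoothness of $\rho$ to extract cancellation in the $m$-sum. The one genuine technical difference is how that cancellation is obtained. The paper uses Poisson summation (Lemma~\ref{l:FM}): writing $\sum_m \rho(m/T)e^{2\pi i m\zeta}=\sum_m T\hat\rho(T(m+\zeta))$ and invoking rapid decay of $\hat\rho$ once $|\eta|\geq T^{\epsilon-1}$. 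You instead use iterated Abel summation, bounding by $T^{1-N}|z-1|^{-N}$ via $N$-th finite differences of $\rho(\cdot/T)$. These are dual devices and yield the same quality of estimate; your route is arguably more elementary since it avoids any analytic continuation, while the paper's Poisson approach packages both the ``large $\|\gamma\|$'' and ``small trace'' regimes into a single pointwise bound on $F^{dis}_T$. Your termwise treatment of the $\|\gamma\|\geq C$ case via Proposition~\ref{p:PhiEst}(2) is a clean alternative to the paper's use of Lemma~\ref{l:FM} there. One small point: your inequality $|z-1|\gg|\sin((\alpha+\beta)/2)|$ does need the observation that it holds uniformly in $X=\cosh(H/2)^2\geq 1$ (split into $X\leq 2$ and $X\geq 2$), but this is immediate from the formula you wrote.
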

Before we prove this result we need a few preliminary estimates. 
\begin{lem}\label{l:FM}
For $g=k_\alpha a_t k_\beta$ we have 
$$|F^{dis}_T(k_\alpha a_t k_\beta)|\leq \frac{T^2}{(\cosh(\frac{t}{2}))^{T}}.$$
Moreover, let $\eta\in [-\frac{1}{2},\frac{1}{2})$ so that $\eta\equiv \frac{\alpha+\beta}{2\pi}\pmod{\Z}$, then assuming  $|\eta| \geq T^{\epsilon-1}$ we can bound 
$$|F^{dis}_T(k_\alpha a_t k_\beta)| \ll_{\epsilon,\ell} \frac{1}{T^\ell \cosh(\frac{t}{2})^{T}}$$

\end{lem}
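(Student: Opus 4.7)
Substituting the explicit formula $\Phi_m(k_\alpha a_t k_\beta) = e^{im(\alpha+\beta)}/\cosh(t/2)^{2|m|}$ from \eqref{e:mSpherical} into the definition \eqref{e:F_M} of $F^{dis}_T$ gives
$$F^{dis}_T(k_\alpha a_t k_\beta) = \frac{T}{4\pi}\sum_{m\in\Z}\rho(m/T)\frac{e^{im(\alpha+\beta)}}{\cosh(t/2)^{2|m|}}.$$
Because $\supp\rho$ is compact and bounded away from $0$, only $O(T)$ positive integers $m$ (with $m\geq 3T/4$) contribute, and on these $|m|=m$. The first bound then follows immediately from the triangle inequality: each term is bounded by $\cosh(t/2)^{-2m}\leq \cosh(t/2)^{-3T/2}\leq \cosh(t/2)^{-T}$, and multiplying by the $T/(4\pi)$ prefactor and the $O(T)$ count yields $|F^{dis}_T|\ll T^2 \cosh(t/2)^{-T}$.

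For the refined bound when $|\eta|\geq T^{\epsilon-1}$, the plan is to apply Poisson summation to exploit the oscillation $e^{im(\alpha+\beta)}$. Setting $g(x)=\rho(x/T)\, e^{ix(\alpha+\beta)}\cosh(t/2)^{-2x}$ produces a smooth compactly supported function on $\R$ (supported in $[3T/4,5T/4]$), so Poisson summation gives
$$\sum_{m\in\Z}g(m)=\sum_{k\in\Z}\hat g(k),\qquad \hat g(k)=T\int \rho(y)\,e^{Ty(i\omega_k-\sigma)}\,dy,$$
with $\omega_k=(\alpha+\beta)-2\pi k$ and $\sigma=2\log\cosh(t/2)\geq 0$. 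The phase in $y$ being linear, $\ell$-fold integration by parts transfers all derivatives onto $\rho$ and introduces the factor $(T(i\omega_k-\sigma))^{-\ell}$; combining this with $|i\omega_k-\sigma|\geq|\omega_k|$ and the pointwise estimate $e^{-Ty\sigma}\leq\cosh(t/2)^{-3T/2}\leq\cosh(t/2)^{-T}$ on $\supp\rho$ yields
$$|\hat g(k)|\ll_\ell \frac{\cosh(t/2)^{-T}}{T^{\ell-1}|\omega_k|^\ell}.$$

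The decisive input is that $\min_{k\in\Z}|\omega_k|=2\pi|\eta|$ is attained at a unique resonant $k=n$, while for $k\neq n$ one has $|\omega_k|\gtrsim|k-n|$. The hypothesis $|\eta|\geq T^{\epsilon-1}$ therefore bounds the resonant contribution by $\ll T^{1-\ell\epsilon}\cosh(t/2)^{-T}$, while the non-resonant terms sum geometrically to $\ll T^{1-\ell}\cosh(t/2)^{-T}$. Multiplying by the $T/(4\pi)$ prefactor gives $|F^{dis}_T|\ll T^{2-\ell\epsilon}\cosh(t/2)^{-T}$, and choosing $\ell$ sufficiently large (depending on the desired decay exponent and on $\epsilon$) produces the claimed rapid decay. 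The only real subtlety is that $|m|$ is not smooth at $m=0$, which would otherwise obstruct Poisson summation; this is sidestepped automatically by the support condition on $\rho$, which confines the sum to $m$ of a single sign where $|m|=m$.
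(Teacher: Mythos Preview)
Your proof is correct and follows essentially the same approach as the paper: Poisson summation combined with integration by parts to exploit the linear phase, then separating the ``resonant'' dual term at distance $2\pi|\eta|$ from the others. The only minor difference is that for the first (crude) bound you use the triangle inequality directly over the $O(T)$ terms in the support of $\rho$, whereas the paper reads the same bound off the $m=0$ term after Poisson summation; your route is slightly more direct here.
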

\begin{proof}
Let $\rho_T(x)=\rho(\frac{x}{T})$ so the Fourier transform $\hat{\rho}_T(y)=T\hat\rho(Ty)$ with 
$\hat\rho(z)=\int_{\bbR} \rho(x)e^{2\pi i xz} dx$. Let $\eta=\frac{(\alpha+\beta)}{2\pi}$. Recalling that $\rho(x)=0$ for $x\leq 0$, by Poisson summation 
\begin{eqnarray*}
F^{dis}_T(k_\alpha a_t k_\beta)&=&\frac{T}{4\pi} \sum_{m\in \Z} \rho_T(m)e^{2\pi im (\eta+i\frac{\log(\cosh(\frac{t}{2}))}{\pi})}\\
&=&\frac{T}{4\pi}  \sum_{m\in \Z} \hat\rho_T(m+\eta+i\frac{\log(\cosh(\frac{t}{2}))}{\pi})\\
&=&\frac{T^2}{4\pi} \sum_{m\in \Z} \hat\rho(T(m+\eta+i\frac{\log(\cosh(\frac{t}{2}))}{\pi})).\\
\end{eqnarray*}
For $z=x+iy$ with $y>0$ we can estimate using integration by parts and the fact that $\rho$ is supported on $[3/4,5/4]$
$$|\hat\rho(Tz)|\ll_{\rho,k} \frac{e^{-3\pi T y/2} }{T^k |z|^k}$$
hence 
$$T^2 |\hat\rho(T(m+\eta+i\frac{\log(\cosh(\frac{t}{2}))}{\pi}))|\ll T^{2-k}|m+\eta|^{-k} \cosh(\frac{t}{2})^{-3T/2}.$$
The contribution of all $m\neq 0$ are hence bounded by $O(T^{2-k} \cosh(\frac{t}{2})^{-3T/2})$ and  we can trivially bound contribution of $m=0$ by $O(T^2 \cosh(\frac{t}{2})^{-3T/2})$) giving the general bound. Now when  $\eta>0$ we can further bound
\begin{eqnarray*}
F^{dis}_T(k_\alpha a_t k_\beta)&\ll_k  & T^2 | \cosh(\frac{t}{2})^{-2T/2}|T \eta|^{-k}
\end{eqnarray*}
Using the assumption that $T\eta\geq T^{\epsilon}$ and taking $k$ large enough so that $\epsilon k\geq \ell+2$ concludes the proof.
\end{proof}

In order to apply this to bound $F^{dis}_T(g^{-1}\gamma  g)$ we will use the following simple observation.
\begin{lem}\label{l:kak2}
For any $t>0$ and non zero $\theta\in (-\pi ,\pi)$ decompose 
$$a_{-t}k_\theta a_t=k_\alpha a_H k_\beta$$
If  $|\theta|\geq  \delta$ and $|\theta\pm \pi|\geq  \delta$, then 
$|\alpha+\beta|\in [\delta,2\pi-\delta]$.
\end{lem}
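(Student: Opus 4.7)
The plan is to apply the explicit identity \eqref{e:aka2} from Lemma~\ref{l:kak}, which gives $\alpha+\beta = \theta + 2\,\mathrm{Arg}(1 + u(1-e^{-i\theta}))$ with $u = \sinh^2(t/2) \in [0,\infty)$. Let $w(u) = 1 + u(1-e^{-i\theta})$. The central observation I would exploit is that $\mathrm{Re}(w(u)) = 1 + u(1-\cos\theta) \geq 1$, so the image $\{w(u):u\geq 0\}$ is a ray lying entirely in the open right half-plane. Hence $\mathrm{Arg}(w(u))$ stays in $(-\pi/2,\pi/2)$ and, being the principal argument along a straight line not passing through $0$, is a monotone function of $u$.

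Next I would compute the two endpoints of $\mathrm{Arg}(w(u))$. At $u=0$ we have $w(0)=1$, so $\mathrm{Arg}(w(0))=0$. As $u\to\infty$ the argument tends to $\mathrm{Arg}(1-e^{-i\theta})$, and using the factorization $1-e^{-i\theta} = 2i\sin(\theta/2)e^{-i\theta/2}$ this limit equals $(\pi-\theta)/2$ when $\theta\in(0,\pi)$ and $-(\pi+\theta)/2$ when $\theta\in(-\pi,0)$. Plugging back into the identity, $\alpha+\beta$ moves monotonically from $\theta$ (at $u=0$) to $\pi$, respectively $-\pi$ (as $u\to\infty$), and therefore lies in $[\theta,\pi]$ when $\theta>0$ and in $[-\pi,\theta]$ when $\theta<0$.

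Finally I would feed in the hypotheses. The assumption $|\theta|\geq\delta$ together with $|\theta\pm\pi|\geq\delta$ places $\theta$ in $[-\pi+\delta,-\delta]\cup[\delta,\pi-\delta]$. In the positive case $\alpha+\beta \in [\delta,\pi]$; in the negative case $\alpha+\beta \in [-\pi,-\delta]$, which after reduction modulo $2\pi$ into $[0,2\pi)$ becomes $[\pi,2\pi-\delta]$. Either way $|\alpha+\beta|\in[\delta,2\pi-\delta]$, as required. There is no real obstacle here; the argument is just a geometric unwrapping of Lemma~\ref{l:kak}, and the role of the hypothesis $|\theta\pm\pi|\geq\delta$ is precisely to keep $\alpha+\beta$ away from the identification point $\pm\pi\pmod{2\pi}$, which is the limiting value of the argument as $u\to\infty$.
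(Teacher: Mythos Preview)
Your argument is correct and follows the same route as the paper: both invoke identity \eqref{e:aka2}, show that $\mathrm{Arg}(1+\sinh^2(t/2)(1-e^{-i\theta}))$ is monotone in $t$ with value $0$ at $t=0$, and then read off the range of $\alpha+\beta$. The only difference is cosmetic: the paper bounds $\vartheta(t)\in[0,\pi/2)$ via a derivative computation of $\tan\vartheta(t)$, whereas you argue geometrically (the image is a ray in the right half-plane) and compute the sharper limit $(\pi-\theta)/2$ via the factorization $1-e^{-i\theta}=2i\sin(\theta/2)e^{-i\theta/2}$, yielding the tighter interval $[\theta,\pi)$ instead of $[\theta,\theta+\pi)$.
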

\begin{proof}
For $\theta$ fixed let 
$$\vartheta(t)=\mathrm{Arg}(1+\sinh^2(\frac{t}{2})(1-e^{-i\theta}))$$
so that
$$\tan(\vartheta(t))=\frac{\sinh^2(\frac{t}{2})\sin(\theta)}{1+\sinh^2(\frac{t}{2})(1-\cos(\theta))}.$$
Note that $\vartheta(0)=0$ and $\vartheta(t)$ is increasing if $\theta>0$ and decreasing if $\theta<0$. Indeed, since $\tan$ is monotone it is enough to show this for $\tan(\vartheta(t))$; taking a derivative we see that 
$$\frac{d}{dt}\tan(\vartheta(t))=\frac{\sinh(\frac{t}{2})\cosh(\frac{t}{2})\sin(\theta) }{(1+\sinh^2(\frac{t}{2})(1-\cos(\theta))^2},$$
which is positive when $\theta>0$ and negative when $\theta<0$.
In the limit as $t\to \infty$ we have 
$$\lim_{t\to\infty}\tan(\vartheta(t))=\frac{\sin(\theta)}{(1-\cos(\theta))}=\frac{1+\cos(\theta)}{\sin(\theta)}.$$
In particular this shows that $\vartheta(t)\in [0,\frac{\pi}{2})$ when $\theta>0$ and in  $(-\frac{\pi}{2},0]$ when $\theta<0$.
Hence since $\alpha+\beta=\theta+2\vartheta(t)$  we get that 
$\alpha+\beta \in [\theta, \theta+\pi] \subseteq [\delta, 2\pi-\delta)$  when $\theta\in [\delta,\pi-\delta]$ and similarly 
$\alpha+\beta\in (-2\pi+\delta, -\delta]$  when $\theta\in [\delta-\pi ,-\delta]$.
\end{proof}

We can now give the proof of our main estimate.
\begin{proof}[Proof of Proposition \ref{p:FM}]
The first part follows directly from (\ref{e:SPhim}) and the second from (\ref{e:Htransform2}). For the last part we bound 
$$\int_{B_c}|F^{dis}_T(g^{-1}\gamma g)|dg\leq \vol(B_c)\sup\{|F^{dis}_T(g^{-1}\gamma g)|:g\in B_c\}.$$
When $\|\gamma\|\geq 2C$ is large then $\|g^{-1}\gamma g\|\geq C$ for all $g\in B_c$ and we can use the first part of Lemma \ref{l:FM} to conclude that $|F^{dis}_T(g^{-1}\gamma g)|\ll T^2(\frac{C}{\|\gamma\|})^T$, and similarly that $|F^{dis}_T(g^{-1}\gamma g)|\ll T^2$ when $\|\gamma\|\leq 2C$ is small. Finally, if $|\Tr(\gamma)|<2-T^{\epsilon-2}$ then this is also true for $g^{-1}\gamma g$ for any $g$.
We can thus write $g^{-1}\gamma g=\tau^{-1}k_\theta \tau$ with  $\Tr(\gamma)=2\cos(\frac{\theta}{2})$. The condition on $\Tr(\gamma)$ implies that  $\sin^2(\frac{\theta}{2})\geq \frac{1}{2}T^{\epsilon-2}$ and hence $|\theta| \in [\delta,\pi-\delta]$ with $\delta=T^{\epsilon/2-1}$.
Now write  $\tau=k_1 a_t k_2$ so that $g^{-1}\gamma g=\tau^{-1} k_\theta \tau=k_2^{-1}a_{-t}k_\theta a_tk_2$ to get that
$$F^{dis}_T(g^{-1}\gamma g)=F^{dis}_T(a_{-t}k_\theta a_t).$$
Writing $a_{-t}k_\theta a_t=k_\alpha a_H k_\beta$, by Lemma \ref{l:kak} we have that $|\alpha+\beta|\in [\delta, 2\pi-\delta)$ and hence 
$\eta=\frac{\alpha+\beta}{2\pi}\pmod{\Z}$ is also bounded below by $|\eta|\geq T^{\epsilon/2-1}$. By the second part of Lemma \ref{l:FM} (with $\ell=1$) we get that 
$$|F^{dis}_T(g)|\ll_{\epsilon} 1,$$
as claimed.
\end{proof}

\section{Proof of main Theorems }
\subsection{Proof of Theorem \ref{t:main1}}
 Given a non-tempered irreducible representation $\pi\in \hat{G}$ we have $\pi\cong \pi_1\otimes\ldots \otimes \pi_d$ with at leasts one of the factors a complementary series representation. Let $\{1,\ldots, d\}= J_1\cup J_2\cup J_3$ where 
 $J_1=\{1\leq j\leq d: \pi_j \mbox{ is complementary series or } \pi_j\cong \frak{D}_{\pm 1} \}$,\;  $J_2=\{j: \pi_j \mbox{ is principal series}\}$, and $J_3=\{1\leq j\leq d: \pi_j\cong \frak{D}_{m_j}: |m_j|\geq 2\}$. Let $j_0\in J_1$ with $\pi_{j_0}\cong \pi_{s_0}$  with $s_0=\frac{1}{p(\pi)}$, and for any $j\in J_2$ let $t_j\in \R$ so that $\pi_j\cong \pi_{\tfrac{1}{2}+it_j}$. Let $m=(m_1,\ldots, m_d)$ so that $m_j=0$ if $\pi_j$ is spherical and $\pi_j\cong \frak D_{m_j}$ otherwise.
  
 To bound the multiplicity $\m(\pi,\Gamma(\frak a))$ we use the trace formula with the test function 
$F(g)=\prod_j F^{(j)}(g_j)$, where the functions $F^{(j)}$ are defined as follows: First we set $F^{(j_0)}=F^{ex}_R$ as in Proposition \ref{p:FR} for a parameter $R$ to be determined later. For $j\in J_1\setminus\{j_0\}$ we let $F^{(j)}=f*\check f$ with 
$f\in \calF_{m_j}$ a fixed smooth non-negative function supported on $B_1$, and note that its spherical transform $h_j(r)=SF_2(r)\geq 0$ for $r\in \R\cup i\R$ and after maybe rescaling we may assume that $h_j(\frac{i}{2})=1$; For $j\in J_2$ we let 
$F^{(j)}=F^{s}_{t_j}$ as defined in Proposition \ref{p:Ft}, and for $j\in J_3$ we let $F^{(j)}=\frac{2|m_j|+1}{4\pi}\Phi_{m_j}$. 

Since the spherical transforms $h_j=SF^{(j)}$ are all  non-negative for $j\in J_1\cup J_2$ and satisfy that 
$h_{j_0}(s_0)\asymp e^{2R(1-s_0)}$ (see Proposition \ref{p:FR}), that $h_j(t_j)\asymp 1$ for $j\in J_3$ (see Proposition \ref{p:Ft}) and that $h_j(\tfrac{i}{2})=1$ for $j\in J_2$, the left hand side of the trace formula is bounded below by $\m(\pi,\Gamma(\frak a)) e^{2R(1-s_0)}$.
The right hand side of the trace formula is given by 
\begin{eqnarray*}
\vol(\Gamma(\frak a)\bs G^d)F(1)+{\sum_{\gamma\in \Gamma(\frak a)}}'\int_{\Gamma(\frak a)\bs G^d}F(g^{-1}\gamma g)dg
\end{eqnarray*}
where the sum $\Sigma'$ indicates we are only summing over $\gamma\in \Gamma(\frak a)$ with $|\Tr(\gamma_j)|<2$ for $j\in J_3$.
Fix a fundamental domain $B\subseteq G^d$ for $\Gamma\bs G$ and note that we can cover 
the fundamental domain for $\Gamma(\frak a)\bs G^d$ by a disjoint union of translates  $\tau_j B$ with $\tau_j\in \Gamma(\frak a)\bs \Gamma$, and after a change of variables we see that 
$$\int_{\Gamma(\frak a)\bs G^d}F(g^{-1}\gamma g)dg=\sum_{\tau_j\in \Gamma(\frak a)\bs \Gamma}\int_{B}F(g^{-1}\tau_j\gamma \tau_j^{-1}g)dg.$$
Recalling that $\Gamma(\frak a)$ is normal in $\Gamma$ we get that 
\begin{eqnarray*}
{\sum_{\gamma\in \Gamma(\frak a)}}'\int_{\Gamma(\frak a)\bs G^d}F(g^{-1}\gamma g)dg&=&\sum_{\tau_j\in \Gamma(\frak a)\bs \Gamma}{\sum_{\gamma\in \Gamma(\frak a)}}'\int_{B}F(g^{-1}\tau_j\gamma\tau_j^{-1} g)dg\\
&=& [\Gamma:\Gamma(\frak a)]{\sum_{\gamma\in \Gamma(\frak a)}}'\int_{B}F(g^{-1}\gamma g)dg\\
\end{eqnarray*}
Since $\Gamma$ is uniform then $B$ is compact and there is some $c>0$ such that $B\subseteq B_c^d$.  Noting that 
$V(\frak{a})=\vol(\Gamma(\frak a)\bs G^d)\asymp [\Gamma:\Gamma(\frak a)]$, and that 
$$F(1)\asymp e^R \prod_{j\in j_2}(1+|t_j|)\prod_{j\in J_3}|m_j|=e^R T(\pi),$$ comparing the two sides of the trace formula we get
$$
\m(\pi,\Gamma(\frak a)) e^{2R(1-s_0)}\ll V(\frak a) \bigg( e^RT(\pi)+{\sum_{\Gamma(\frak a)}}' \prod_j \int_{B_c}F^{(j)}(g^{-1}\gamma_j g)dg\bigg).
$$

To bound the sum over the non-trivial elements we consider lattice elements with $\|\gamma_{j_0}\|^2\in [x,2x]$ and $\|\gamma_j\|\leq C$ for all other $j\in J_1\cup J_2$. Further split $J_3=J_3'\cup J_3''$  and assume that that for any $j\in J_3'$ we have that  again $\|\gamma_j\|\leq C$ and that either $2-2^{1-k_j}< |\Tr(\gamma_j)|<2-2^{-k_j}$ with $1\leq 2^{k_j}\leq \frac{|m_j|}{\log(|m_j|)}$ or $|\Tr(\gamma_j)|>2-\frac{2\log|m_j|}{|m_j|}$. For  $j\in J_3''$ we have that $\|\gamma_j\|^2 \in [k_j,k_j+1]$ with $k_j\geq C^2$. For such an element we can bound 
$$\prod_j \int_{B_c}|F^{(j)}(g^{-1}\gamma_j g)|dg\bigg)\ll \frac{e^R}{x^{1/2}}\prod_{j\in J_3'} 2^{k_j}\prod_{j\in J_3''}k_j^{-|m_j|},$$
where in the case when $|\Tr(\gamma_j)|>2-\frac{2\log|m_j|}{|m_j|}$ with $j\in J_3'$  the factor of $2^{k_j}$ is replaced by $|m_j|$.
By Proposition  \ref{p:NGamma},  the number of such lattice points is bounded by 
$$O(\prod_{j\in J_3'} 2^{-k_j}(\frac{x^{1+\epsilon}}{V(\frak a)}+\frac{(\prod_{j\in J_3''}k_j)^\epsilon x^{1/2+\epsilon}}{V(\frak a)^{2/3}})$$ and hence the contribution of these elements is bounded by  
$$O((\prod_{j\in J_3''}k_j^{\epsilon-|m_j|} e^R (\frac{x^{1/2+\epsilon}}{V(\fraka)}+ \frac{x^{\epsilon}}{V(\fraka)^{2/3}})).$$
 Summing over all values of $k_j\leq \log(|m_j|)$ for $j\in J_3'$ and $k_j\geq C^2$ for $j\in J_3''$, and all $x\leq e^{2R}$ a power of two, and then considering all partitions of $J_3$ we get that 
$${\sum_{ \gamma\in \Gamma(\frak a)}}'\prod_j \int_{B_c}|F^{(j)}(g^{-1}\gamma_j g)|dg\bigg)\ll 
T(\pi)^\epsilon( \frac{e^{R(2+\epsilon)}}{V(\frak a)}+ \frac{e^{R(1+\epsilon)}}{V(\frak a)^{2/3}}).$$
Comparing the two sides of the trace formula  we thus get that 
$$
\m(\pi,\Gamma(\frak a)) e^{2R(1-s_0)}\ll e^RV(\frak a) T(\pi)+T(\pi)^\epsilon( e^{R(2+\epsilon)}+ e^{R(1+\epsilon)}V(\frak a)^{1/3}).
$$
Choosing $R$ so that   $e^R=T(\pi)V(\frak a)$ we get that 
$$
\m(\pi,\Gamma(\frak a))\ll (V(\frak a) T(\pi))^{2s_0+\epsilon}
$$
and since $s_0=\frac{1}{p(\pi)}$ this concludes the proof of Theorem \ref{t:main1}.\qed

\subsection{Proof of Theorem \ref{t:main2}}
Next in order to bound $\m(\frak B, \Gamma(\frak a))$, we first consider for any $T=(T_1,\ldots, T_d)$ with $T_j$ powers of $2$,  the following sets 
$$\frak{B}_T(p_0)=\left\{ \pi_1\otimes\ldots\otimes  \pi_d: p(\pi)\geq p_0 \mbox{ and }  \left\{\begin{array}{cc}T(\pi_j)\in  [T_j,2T_j) & T_j\geq 2\\
T(\pi_j)<2 & T_j<2
\end{array}\right.\right\}.$$
We then show that for any such set we have 
\begin{equation}\label{e:mB}
\m(\frak{B}_T(p_0),\Gamma(\frak a)) \ll_\epsilon  (V(\frak a)|T|^2)^{\frac{2}{p_0}+\epsilon}
\end{equation}
where $|T|=\prod_j T_j$.

As a first step we partition the set $\frak{B}_T(p_0)$ into finitely many subsets parametrized by a partitions $\calJ$ of  $\{1,\ldots, d\}=\{j_0\}\cup J_1\cup J_2,\cup J_3\cup J_4$. For such a partition consider the subset 
$$\frak B_\calJ=\left\{ \pi_1\otimes\ldots\otimes  \pi_d:   
\begin{array}{lc}
p(\pi_{j})\geq p_0 & j=j_0\\
 T(\pi_{j})< 2 & j\in J_1 \\
\pi_j \mbox{ spherical with }
T(\pi_j)\in [T_j,2T_j]& j\in J_2\\
\pi_j\cong \frakD_{m_j},\; m_j\in [T_j,2T_j]& j\in J_3\\
\pi_j\cong \frakD_{m_j},\; -m_j\in [T_j,2T_j]& j\in J_4
\end{array}\right\}$$
Given such a partition and a small parameter $\epsilon>0$ we use the trace formula with the test function $F(g)=\prod_j F^{(j)}(g_j)$ where $F^{(j)}$ defined as follows:
$F^{(j_0)}=F^{ex}_R$ as in Proposition \ref{p:FR}; for $j\in J_1$ we let 
$F^{(j)}(g)=f*\check{f}(g)+\overline{ f * \check{f}(g)}$ with $f\in \calF_1$ supported on $B_1$; for $j\in J_2$ we let 
$F^{(j)}=F^{sp}_{T_j}$ as in Proposition \ref{p:FT2};  for $j\in J_3$ we let $F^{(j)}= F^{dis}_{T_j}$  and for $J\in J_4$ we have $F^{(j)}=\overline{ F^{dis}_{T_j}}$ where $F^{dis}_{T_j}$ is as in Proposition \ref{p:FM}.

Using linearity of the trace formula, the left hand side can be bounded below by 
$$\m(\frak{B}_\calJ,\Gamma(\frak a))e^{2R(1-\frac{1}{p_0})},$$
and the right hand side is bounded above by a constant multiple of
$$V(\frak a)\bigg(|T|^2 e^R+{\sum_{\gamma\in \Gamma(\frak a)}}' \prod_j \int_{B_c}|F^{(j)}(g^{-1}\gamma_j g)|dg\bigg),$$
where $c>0$ is sufficiently large so that a fundamental domain for  $\Gamma\bs G^d$ is contained in $B_c^d$ and the sum $\Sigma'$ is over all non trivial lattice elements in $\Gamma(\frak a)$  with $|\Tr(\gamma_j)|<2$ for all $j\in J_3\cup J_4$.  By Proposition 
\ref{p:FT2}, we may further restrict the sum to elements with $\|\gamma_j\|\leq C$ for all $j\in J_1\cup J_2$ and with $|\Tr(\gamma_j)|<2+\frac{1}{T_j}$ for $j\in J_2$.
Now further split $J_2=J_2'\cup J_2''$ and $J_3\cup J_4=J'\cup J''\cup J^+$. Fix $x\leq (Ce^R)^2$ some power of $2$, for any $j\in J_2''$ fix an integer $k_j\in [0,\log_2(T_j)]$, for $j\in J^+$ fix and integer $k_j\geq 4C^2$, and consider elements $\gamma\in \Gamma(\frak a)$ satisfying the following conditions: 
\begin{enumerate}
\item  $\|\gamma_{j_0}\|^2\in [x,2x]$.
\item  $\|\gamma_j\|\leq2C$ and $|\Tr(\gamma_j)|-2|<\frac{4}{T^2_j}$ for $j\in J_2'$,
\item  $\|\gamma_j\|\leq 2C$ and  $|\Tr(\gamma_j)|\in [2-2^{-k_j-1},2-2^{-k_j}]$  for $j\in J_2''$,
\item $\|\gamma_j\|\leq 2C$ and $|\Tr(\gamma_j)|>2-\frac{1}{T_j^{2-\epsilon}}$  for $j\in J'$,
\item  $\|\gamma_j\|\leq 2C$ and  $|\Tr(\gamma_j)|<2-\frac{1}{T_j^{2-\epsilon}}$ for $j\in J''$,
\item $\|\gamma_j\|^2\in [k_j,k_j+1]$ for $j\in J^+$.
\end{enumerate}
For any such $\gamma\in \Gamma(\frak a)$ using the estimates in Propositions \ref{p:FR}, \ref{p:FT2} and \ref{p:FM} we can bound 
$$\int_{B_c}|F^{(j)}(g^{-1}\gamma_j g)|dg\ll_\epsilon\left\{ \begin{array}{cc}
\frac{e^R}{\sqrt{x}}& j=j_0\\
1& j\in J_1\\
T_j^2 & j\in J_2'\\
2^{k_j} & j\in J_2''\\
T_j^2 & j\in J'\\
1 & j\in J''\\
T_j^2(C/k_j)^{3T_j/2} & j\in J^+
\end{array}\right.$$
so that
$$\prod_j \int_{B_c}|F^{(j)}(g^{-1}\gamma_j g)|dg\ll \frac{e^R}{x^{1/2}} \prod_{j\in J_2''}2^{k_j}\prod_{j\in J_2 '} T_j^2 \prod_{j\in J_3'} T_j^2 \prod_{j\in J_3^+} T_j^2 (\frac{C}{k_j})^{3T_j/2}.$$
Next we point out that by Proposition \ref{p:NGamma} the number of such elements in $\Gamma(\frak a)$ is bounded by 
$$O\big(\prod_{j\in J_2'}\frac{1}{T_j^2}\prod_{j\in J_2''} 2^{-k_j}\prod_{j\in J'}\frac{1}{T_j^{2-\epsilon}}\big( \frac{x^{1+\epsilon}}{V(\frak a)}+\prod_{j\in J^+}k_j^\epsilon \frac{ x^{1/2+\epsilon}}{V(\frak a)^{2/3}})\big),$$
so that the contribution of all these elements to the sum is bounded by 
$$O\bigg(e^R\prod_{j\in J'} T_j^\epsilon \prod_{j\in J^+} T_j^2(\frac{C}{k_j})^{3T_j/2}(\frac{x^{1/2+\epsilon}}{V(\frak a)}+\prod_{j\in J^+} k_j^\epsilon \frac{x^{\epsilon}}{V(\frak a)^{2/3}})  \bigg).$$
Next, we sum over all $k_j$  noting that for each $j\in J^+$ the sum over all $k_j\geq 2C$ 
$$\sum_{k_j\geq 2C} T_j^2(\frac{C}{k_j})^{3T_j/2}\ll_C T_j2^{-3T_j/2}\ll 1,$$
converges and is uniformly bounded and that for $j\in J_2'$ we have  $ \log_2(T_j)\ll_\epsilon T_j^\epsilon$ values of $k_j$ to sum over. So the contribution of all possible values of $k_j$ is bounded by 
$$O(|T|^\epsilon e^R(\frac{x^{1/2+\epsilon}}{V(\frak a)}+\frac{x^{\epsilon}}{V(\frak a)^{2/3}}),$$
where $|T|=\prod_j T_j$. Finally summing over all possible partitions of $J_2$ and $J_3\cup J_4$ and over all values of $x\ll e^{2R}$ a power of $2$ we get that 
 $${\sum_{\gamma\in \Gamma(\frak a)}}' \prod_j \int_{B_c}|F^{(j)}(g^{-1}\gamma_j g)|dg)\ll (|T|^\epsilon (\frac{e^{R(2+\epsilon)}}{V(\frak a)}+\frac{e^{R(1+\epsilon)}}{V(\frak a)^{2/3}}),$$
 and hence 
 $$\m(\frak{B}_\calJ,\Gamma(\frak a))e^{2R(1-\frac{1}{p_0})}\ll V(\frak a) |T|^2 e^R +|T|^\epsilon (e^{R(2+\epsilon)}+ e^{R(1+\epsilon)}V(\frak a)^{1/3}).$$
Taking the value of $R$ so that $e^R=V(\frak a)|T|^2$ we get that 
$$\m(\frak{B}_\calJ,\Gamma(\frak a))\ll_\epsilon (V(\frak a)|T|^2)^{\frac{2}{p_0}+\epsilon},$$
and summing over the all partitions $\calJ$ of $\{1,\ldots d\}$ gives  (\ref{e:mB}).

Finally, to deal with a general set $\frak B$ note that $\frak{B}$ is contained in the union of the sets $\frak{B}_T(p_0)$ with $p_0=p(\frak B)$ 
and $T=(2^{k_1},\ldots, 2^{k_d})$ with $k_1,\ldots, k_d\in \Z^+$ with $\sum_j k_j\leq \log_2(T(\frak B))$. Since there are at most 
$\log(T(\frak B))^d\ll_\epsilon T(\frak B)^\epsilon$ such choices of $T$ we get that 
$$\m(\frak{B})\ll_\epsilon (V(\frak a)T(\frak B)^2)^{\frac{2}{p(\frak B)}+\epsilon},$$
concluding the proof of Theorem \ref{t:main2}.\qed

\def\cprime{$'$} \def\cprime{$'$}
\providecommand{\bysame}{\leavevmode\hbox to3em{\hrulefill}\thinspace}
\providecommand{\MR}{\relax\ifhmode\unskip\space\fi MR }
\providecommand{\MRhref}[2]{%
  \href{http://www.ams.org/mathscinet-getitem?mr=#1}{#2}
}
\providecommand{\href}[2]{#2}

\end{document}